\providecommand{\U}[1]{\protect\rule{.1in}{.1in}}
\newtheorem{theorem}{Theorem}
\newtheorem{corollary}[theorem]{Corollary}
\newtheorem{example}[theorem]{Example}
\newtheorem{lemma}[theorem]{Lemma}
\newcommand{\sgn}{\text{sgn}}
\begin{document}
\begin{center}  {Stability and bifurcation analysis of a SIR model with saturated incidence rate and saturated treatment}
\end{center}

\smallskip

\smallskip
\begin{center}
{\small \textsc{Erika Rivero-Esquivel}\footnote{email: erika.rivero@correo.uady.mx}, \textsc{Eric \'Avila-Vales}\footnote{email: avila@uady.mx}, \textsc{Gerardo E. Garc\'ia-Almeida}\footnote{Corresponding author. email: galmeida@uady.mx Tel. (999) 942 31 40 Ext. 1108.}
}
\end{center}
\begin{center} {\small \sl $^{1,2,3}$ Facultad de Matem\'aticas, Universidad Aut\'onoma de Yucat\'an, \\ Anillo Perif\'erico Norte, Tablaje 13615, C.P. 97119, M\'erida, Yucat\'an, Mexico}
\end{center}

\bigskip

{\small \centerline{\bf Abstract}
\begin{quote}
We study the dynamics of a SIR epidemic model with nonlinear incidence rate, vertical transmission vaccination for the newborns and the capacity of treatment, that takes into account the limitedness of the medical resources and the efficiency of the supply of available medical resources. Under some conditions we prove the existence of backward bifurcation, the stability and the direction of Hopf bifurcation. We also explore how the mechanism of backward bifurcation affects the control of the infectious disease. Numerical simulations are presented  to illustrate the theoretical findings.
\end{quote}}

\medskip

\noindent {\bf Keywords}: Local Stability, Hopf Bifurcation, Global Stability, Backward Bifurcation

\bigskip

\section{Introduction}

Mathematical models that describe the dynamics of infectious diseases in
communities, regions and countries can contribute to have better approaches in
the disease control in epidemiology. Researchers always look for thresholds,
equilibria, periodic solutions, persistence and eradication of the disease.
For classical disease transmission models, it is common to have one endemic
equilibrium and that the basic reproduction number tells us that a disease is
persistent if is greater than 1,and dies out if is less than 1. This kind of
behaviour associates to forward bifurcation. However, there are epidemic
models with multiple endemic equilibria
\cite{hadeler1997,dushoff1998,driessche2000,brauer2004}, within these models
it can happen that a stable endemic equilibrium coexist with a disease free
equilibrium, this phenomenon is called backward bifurcation \cite{hadeler1995}.

In order to prevent and control the spread of infectious diseases like,
measles, tuberculosis and influenza, treatment is an important and effective
method. In classical epidemic models, the treatment rate of the infectious is
assumed to be proportional to the number of the infective individuals
\cite{anderson1991}. Therefore we need to investigate how the application of
treatment affects the dynamical behaviour of these diseases. In that direction
in \cite{wang2004}, Wang and Ruan, considered the removal rate
\[
T(I)=\left\{
\begin{array}
[c]{ll}%
k, & \text{if }I>0\\
0, & \text{if }I=0.
\end{array}
\right.
\]

In the following model
\[
\begin{aligned}
\frac{dS}{dt}&= A-dS-\lambda SI, \\
\frac{dI}{dt}&=\lambda SI-(d+\gamma)I-T(I), \\
\frac{dR}{dt}&=\gamma I+T(I)-dR,
\end{aligned}
\]
where S, I , and R denote the numbers of the susceptible, infective and
recovered individuals at time t , respectively. The authors study the
stability of equilibria and prove the model exhibits Bogdanov-Takens
bifurcation, Hopf bifurcation and Homoclinic bifurcation. In \cite{zhangliu},
the authors introduce a saturated treatment
\[
T(I)=\frac{\beta I}{1+\alpha I}.
\]
A related work is \cite{zhoufan}, \cite{wancui}. \par
 Hu, Ma and Ruan \cite{humaruan} studied the model
\begin{equation}
\begin{aligned} \dfrac{dS}{dt} =& bm(S+R) - \frac{\beta SI}{1+\alpha I} -bS+p\delta I\\ \dfrac{dI}{dt} =& \frac{\beta SI}{1+\alpha I}+(q\delta -\delta -\gamma)I - T(I)\\ \dfrac{dR}{dt} =& \gamma I-bR+bm'(S+R)+T(I) \label{hmr1}
 \end{aligned}
\end{equation}
the basic assumptions for the model \eqref{hmr1} are, the total population
size at time $t$ is denoted by $N=S+I+R$. The newborns of $S$ and $R$ are
susceptible individuals, and the newborns of $I$ who are not vertically
infected are also susceptible individuals, $b$ denotes the death rate and
birth rate of susceptible and recovered individuals, $\delta $ denotes the death
rate and birth rate of infective individuals, $\gamma $ is the natural recovery rate
of infective individuals. $q$ ($q\leq1$) is the vertical transmission rate,
and note $p=1-q$, then $0\leq p\leq1$. Fraction $m^{\prime}$ of all newborns
with mothers in the susceptible and recovered classes are vaccinated and
appeared in the recovered class, while the remaining fraction, $m=1-m^{\prime
}$, appears in the susceptible class, the incidence rate is described by a
nonlinear function $\beta SI/(1+\alpha I)$, where $\beta$ is a positive
constant describing the infection rate and $\alpha$ is a nonnegative constant.
The treatment rate of the disease is
\[
T(I)=\left\{
\begin{array}
[c]{ll}%
kI, & \text{if }0\leq I\leq I_{0},\\
u=kI_{0}, & \text{if }I>I_{0}%
\end{array}
\right.
\]
where $I_{0}$ is the infective level at which the healthcare systems reaches
capacity.\newline
In this work we will extend model \eqref{hmr1} introducing  the treatment rate
 $\frac{\beta_{2} I}{1+\alpha_{2} I}$, where $\alpha_{2}$, $\beta_{2}>0$,
obtaining the following model
\begin{equation}
\label{hmr3}\begin{aligned} \dfrac{dS}{dt} =& bm(S+R) - \frac{\beta SI}{1+\alpha I} -bS+p\delta I\\
 \dfrac{dI}{dt} =& \frac{\beta SI}{1+\alpha I}+(q\delta -\delta -\gamma)I - \frac{\beta_2 I}{1+\alpha_2 I}\\
  \dfrac{dR}{dt} =& \gamma I-bR+bm'(S+R)+\frac{\beta_2 I}{1+\alpha_2 I}. \end{aligned}
\end{equation}
Because $\frac{dN}{dt}=0$, the total number of population $N$ is constant. For
convenience, it is assumed that $N=S+I+R=1$. By using $S+R=1-I$, the first two
equations of \eqref{hmr3} do not contain the variable $R$. Therefore, system
\eqref{hmr3} is equivalent to the following 2-dimensional system:
\begin{equation}
\label{ruanmod}\begin{aligned} \dfrac{dS}{dt} &=-\dfrac{\beta SI}{1+\alpha I}-bS+bm(1-I)+p\delta I \\ \dfrac{dI}{dt} &=\dfrac{\beta SI}{1+\alpha I}-p\delta I -\gamma I-\dfrac{\beta _2 I}{1+\alpha _2 I} . \end{aligned}
\end{equation}

The parameters in the model are described below:

\begin{itemize}
\item $S,I,R$ are the normalized susceptible, infected, and recovered
population, respectively, therefore it follows that $S,I,R\leq1.$

\item $b$ is a positive number representing the birth and death rate of
susceptible and recovered population.

\item $\delta$ is a positive number representing the birth and death rate of
infected population.

\item $\gamma$ is a positive number giving the natural recovery rate of
infected population.

\item $q$ is positive ($q\leq1$) representing the vertical transmission rate
(disease transmission from mother to son before or during birth). It is
assumed that descendents of the susceptible and recovered classes belong to
the susceptible class, in the same way to the fraction of the newborns of the
infected class not affected by vertical transmission.

\item $p=1-q$ therefore $0\leq p\leq1$.

\item $m^{\prime}$ is positive and it is the fraction of vaccinated newborns
from susceptible and recovered mothers and therefore belong to the recovered
class. $m=1-m^{\prime}\geq0$ is the rest of newborns, which belong to the
susceptible class.

\item $\beta$ is positive, representing the infection rate, $\alpha$ is a
positive saturation constant (In the model the incidence rate is given by the
nonlinear function $\frac{\beta SI}{1+\alpha I}$ ).

\item $\frac{\beta_{2}I}{1+\alpha_{2}I}$ is the treatment function, satisfying
$\lim_{I\rightarrow\infty}\frac{\beta_{2}I}{1+\alpha_{2}I}=\frac{\beta_{2}%
}{\alpha_{2}},$ where $\alpha_{2},\beta_{2}>0.$
\end{itemize}

We note that if $\alpha_{2}=0$ the treatment becomes bilinear, case considered
in \cite{humaruan}, whereas if $\beta_{2}=0$ treatment is null, not being of
interest here. Therefore we will assume $\beta_{2},\alpha_{2}>0.$ \par

The paper in distributed as follows: in section 2 we compute the equilibria points and determine the conditions of its existence (as real values) and positivity, in section 3 we analyze the stability of the disease free equilibrium and endemic equilibria points in terms of value of $\mathcal{R}_{0} $ and the parameters of treatment function. Section 4 is dedicated to study Hopf bifurcation of the endemic equilibria points and section 5 shows discussion of all our results and we give some control measures that could be effective to eradicate the disease in each case. \par

Following \cite{humaruan} we define
\begin{equation}
\mathcal{R}_{0}:=\dfrac{\beta m}{\beta_{2}+p\delta+\gamma}.
\end{equation}
When $ \beta_2 =0 $, $\mathcal{R}_{0} $ reduces to
\begin{equation}
\mathcal{R}_{0} ^{*} = \dfrac{ \beta m}{ p \delta + \gamma },
\end{equation}
which is the basic reproduction number of  model \eqref{ruanmod} without treatment.
\begin{lemma}
Given the initial conditions $S(0)=S_{0}>0,I(0)=I_{0}>0$, then the solution of
(\ref{ruanmod}) satisfies $S(t),I(t)>0\quad\forall t>0$ and $S(t)+I(t)\leq1$.
\label{teo1}
\end{lemma}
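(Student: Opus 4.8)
\emph{Proof strategy.} The plan is to exhibit a compact, positively invariant set for (\ref{ruanmod}) and then sharpen the resulting non-strict inequalities. Concretely, I would work with the closed triangle
\[
\Omega=\{(S,I)\in\mathbb{R}^{2}: S\ge 0,\ I\ge 0,\ S+I\le 1\},
\]
which is the biologically meaningful phase space (recall $S+I+R=1$ with $R\ge 0$, so in particular the relevant initial data satisfy $S_{0}+I_{0}\le 1$). Since the vector field in (\ref{ruanmod}) is $C^{1}$ on an open neighbourhood of $\{I\ge 0\}$ — the denominators $1+\alpha I$ and $1+\alpha_{2}I$ are bounded away from $0$ there — Picard--Lindel\"of yields a unique local solution through $(S_{0},I_{0})$, and once $\Omega$ is shown to be invariant the solution is bounded and hence extends to all $t>0$.

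\emph{Step 1: invariance of $\Omega$.} I would verify that the field points into (or is tangent to) $\Omega$ on the three sides of $\partial\Omega$, i.e.\ check a subtangency (Nagumo-type) condition. On $\{S=0\}\cap\Omega$ one has $\dot S=bm(1-I)+p\delta I\ge 0$ because $0\le I\le 1$; on $\{I=0\}$ the second equation gives $\dot I=0$; and on $\{S+I=1\}\cap\Omega$, substituting $S=1-I$,
\[
-\tfrac{d}{dt}(S+I)=bm'(1-I)+\gamma I+\frac{\beta_{2}I}{1+\alpha_{2}I}\ge 0,
\]
where $m'=1-m$, again using $0\le I\le 1$. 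Hence $\Omega$ is positively invariant, so $S(t),I(t)\ge 0$ and $S(t)+I(t)\le 1$ for all $t>0$.

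\emph{Step 2: strict positivity.} For $I$, I would write the second equation as $I'=I\,\varphi(S,I)$ with $\varphi$ continuous along the trajectory, so that $I(t)=I_{0}\exp\!\left(\int_{0}^{t}\varphi(S(s),I(s))\,ds\right)>0$. For $S$, suppose for contradiction that $t_{1}:=\inf\{t>0:S(t)=0\}$ is finite; then $S(t_{1})=0$, $I(t_{1})>0$ and $(S(t_{1}),I(t_{1}))\in\Omega$, so $S'(t_{1})=bm(1-I(t_{1}))+p\delta I(t_{1})>0$ (the first term is $\ge 0$ since $I(t_{1})\le 1$, the second is $>0$), which is incompatible with $S'(t_{1})\le 0$ — the latter being forced by $S>0$ on $[0,t_{1})$ together with $S(t_{1})=0$. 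Therefore $S(t)>0$ for all $t>0$.

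\emph{Main obstacle.} I expect the one point needing care is the coupling between the two inequalities: the sign of $\dot S$ on $\{S=0\}$ and of $-\frac{d}{dt}(S+I)$ on $\{S+I=1\}$ both require $I\le 1$, while $S+I\le 1$ itself requires $I\ge 0$, so neither bound can be established in isolation — packaging all three constraints simultaneously into the invariance of $\Omega$ is what makes the argument close. A minor additional check is that the subtangency condition also holds at the vertices $(0,0)$, $(1,0)$, $(0,1)$ of $\Omega$, but this is immediate from convexity of $\Omega$ together with the inequalities already verified on the closed faces.
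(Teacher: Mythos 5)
Your proof is correct, and it takes a genuinely different (and tighter) route than the paper's. The paper argues by contradiction at a first time $t_2$ where $S$ or $I$ vanishes, handling positivity of $S$ via the sign of $\dot S$ at $S=0$, positivity of $I$ via uniqueness against the invariant line $I\equiv 0$, and only afterwards treats $S+I\le 1$ by showing $\frac{d}{dt}(S+I)\le 0$ on the line $S+I=1$. You instead establish positive invariance of the whole triangle $\Omega$ at once via a Nagumo subtangency check on its three faces, and then upgrade to strict inequalities. The payoff of your packaging is precisely the point you flag as the ``main obstacle'': the sign of $\dot S$ on $\{S=0\}$ (namely $bm(1-I)+p\delta I\ge 0$) and of $-\frac{d}{dt}(S+I)$ on $\{S+I=1\}$ both need $I\le 1$, which in turn needs $S+I\le 1$ and $S\ge 0$. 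The paper's proof asserts $\frac{dS(t_2)}{dt}>0$ at the first zero of $S$ without having yet established $I(t_2)\le 1$ at that stage, so it quietly uses the conclusion it is still proving; your simultaneous treatment of all three constraints repairs that circularity. You also make explicit the hypothesis $S_0+I_0\le 1$ (from $R\ge 0$), which the paper uses only implicitly when it says $S+I$ is nonincreasing \emph{along} $S+I=1$. Two minor remarks: your exponential representation $I(t)=I_0\exp\bigl(\int_0^t\varphi\bigr)$ is a cleaner substitute for the paper's uniqueness argument against the solution $I\equiv 0$; and both your proof and the paper's rely on $p\delta>0$ or $bm>0$ to get \emph{strict} positivity of $\dot S$ at $S=0$ (degenerate parameter choices $p=0$, $m=0$ would only give $\dot S\ge 0$ at the vertex $(0,1)$), so you are no worse off than the original there.
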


\begin{proof}
Take the solution $S(t),I(t)$ satisfying the initial conditions $S(0)=S_{0}%
>0,I(0)=I_{0}>0$. Assume that the solution is not always positive, i.e., there
exists a $t_{0}$ such that $S(t_{0})\leq0$ or $I(t_{0})\leq0$. By Bolzano's
theorem there exists a $t_{1}\in(0,t_{0}]$ such that $S(t_{1})=0$ or
$I(t_{1})=0$, which can be written as $S(t_{1})I(t_{1})=0$ for some $t_{1}%
\in(0,t_{0}]$. Let
\begin{equation}
t_{2}=\min\{t_{i},S(t_{i})I(t_{i})=0\}.
\end{equation}
Assume first that $S(t_{2})=0$, then $\frac{dS\left(  t_{2}\right)  }{dt}>0$
implying that $S$ is increasing at $t=t_{2}.$ Hence $S(t)$ is negative for
values of $t<t_{2}$ near $t_{2},$ a contradiction. Therefore $S(t)>0$ $\forall
t>0$ and we must have $I(t_{2})=0,$ implying $\frac{dI(t_{2})}{dt}=0.$ Note
that if for some $t\geq0$ $I(t)=0,$ then $\frac{dI(t)}{dt}=0.$ Then any
solution with $I(0)=I_{0}=0$ will satisfy $I(t)=0$ $\forall t>0.$ By
uniqueness of solutions this fact implies that if $I(0)=I_{0}>0,$ then $I(t)$
will remain positive for all $t>0.$ Therefore $I(t_{2})=0$ leads to a
contradiction. Hence both $S$ and $I$ are nonnegative for all $t>0.$ Finally,
adding both derivatives of $S(t)$ and $I(t)$ we get:
\begin{equation}
\dfrac{d(S+I)}{dt}=-bS+bm-bmI-\gamma I-\dfrac{\beta_{2}I}{1+\alpha_{2}I}%
\end{equation}
Being $S,I\geq0$, if $S+I=1$ then $0\leq S\leq1$, $0\leq I\leq1$. Analyzing
the expression $-bS+bm-bmI$,
\[
-bS+bm-bmI=b(m-mI-S)=b(m-mI-1+I)=b(m-1+I(1-m)).
\]
Note that by the definition of the model parameters, $1-m=m^{\prime}\geq0$.
Knowing that $I\leq1,$ then
\begin{equation}
I(1-m)\leq1-m\Rightarrow I(1-m)+m-1\leq0.
\end{equation}
Therefore $-bS+bm-bmI\leq0$. Hence $\frac{d(S+I)}{dt}\leq0$ and $S+I$ is non
increasing along the line $S+I=1$, implying that $S+I\leq1$. Note also that
$S+I$ cannot be grater than 1, otherwise from $R=1-(S+I),$ $R$ would be
negative, a nonsense.
\end{proof}

\section{Existence and positivity of equilibria}

Assume that system (\ref{ruanmod}) has a constant solution $(S_{0},I_{0})$, then:%

\begin{align}
-\dfrac{\beta S_{0}I_{0}}{1+\alpha I_{0}}-bS_{0}+bm(1-I_{0})+p\delta I_{0}  &
=0\label{eqec1}\\
\dfrac{\beta S_{0}I_{0}}{1+\alpha I_{0}}-p\delta I_{0}-\gamma I_{0}%
-\dfrac{\beta_{2}I_{0}}{1+\alpha_{2}I_{0}}  &  =0 \label{eqec2}%
\end{align}
From (\ref{eqec1}) we obtain
\begin{equation}
 S_{0}=\dfrac{(1+\alpha I_{0})(bm(1-I_{0})+p\delta I_{0})}{\beta
I_{0}+b(1+\alpha I_{0})}. \label{eqec3}%
\end{equation}
And we get from (\ref{eqec2}):
\begin{align}
& I_{0}\left(  \dfrac{\beta S_{0}}{1+\alpha I_{0}}-p\delta-\gamma-\dfrac
{\beta_{2}}{1+\alpha_{2}I_{0}}\right)   =0\nonumber\\
& \Rightarrow I_{0}=0\quad  \text{or}\quad\dfrac{\beta S_{0}}{1+\alpha I_{0}%
}-p\delta-\gamma-\dfrac{\beta_{2}}{1+\alpha_{2}I_{0}}=0. \label{eqec4}%
\end{align}
If $I_{0}=0$ then $S_{0}=m$, obtaining in that way the disease-free
equilibrium $E=(m,0)$.

\begin{theorem}
System (\ref{ruanmod}) has a positive disease-free equilibrium $E=(m,0)$.
\end{theorem}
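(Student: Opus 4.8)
The plan is to verify directly that the pair $(m,0)$ solves the equilibrium equations (\ref{eqec1})--(\ref{eqec2}) and lies in the biologically meaningful region identified in Lemma \ref{teo1}. First I would set $I_{0}=0$ in (\ref{eqec2}); since every term of that equation carries a factor $I_{0}$, it is satisfied identically, so the second equilibrium condition imposes no constraint. Next I would substitute $I_{0}=0$ into (\ref{eqec1}), which collapses to $-bS_{0}+bm=0$; equivalently, one may simply read off $S_{0}=m$ from the closed-form expression (\ref{eqec3}) by putting $I_{0}=0$, since the factor $(1+\alpha I_{0})$ cancels and the denominator becomes $b$. This already pins down the candidate equilibrium as $E=(m,0)$, which is precisely the reduction performed in the displayed computation preceding the statement.

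It then remains to argue that $E$ is admissible. Positivity of the susceptible component follows from $m=1-m'$ together with the standing convention that the susceptible class of newborns is nonempty, i.e.\ $m>0$; if one only adopts $m\geq0$, the equilibrium is still nonnegative. Finally, $S+I=m+0=m\leq1$ because $m'\geq0$, so $E$ lies in the invariant triangle $\{\,S,I\geq0,\ S+I\leq1\,\}$ furnished by Lemma \ref{teo1}, and correspondingly $R=1-(S+I)=1-m=m'\geq0$, as it must be.

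I do not anticipate any genuine obstacle: the verification is an immediate substitution, and the only point requiring a word of care is the meaning of ``positive'' — whether it refers to $S_{0}>0$ (which needs $m>0$) or merely to nonnegativity of all three components $(S,I,R)$, which holds unconditionally under the parameter conventions fixed in the introduction.
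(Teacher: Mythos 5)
Your verification is correct and coincides with the paper's own (implicit) argument: the paper obtains $E=(m,0)$ exactly by setting $I_{0}=0$ in the equilibrium equations, which trivializes (\ref{eqec2}) and reduces (\ref{eqec1}) to $S_{0}=m$. Your additional remarks on admissibility and on the meaning of ``positive'' are sensible but not part of the paper's treatment.
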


In order to obtain positive solutions of system \ref{ruanmod}  if $I_{0}\neq0$ then:%

\begin{align}
& \dfrac{\beta S_{0}}{1+\alpha I_{0}}-p\delta-\gamma-\dfrac{\beta_{2}}%
{1+\alpha_{2}I_{0}}    =0\nonumber\\
& \Rightarrow S_{0}    =\dfrac{1+\alpha I_{0}}{\beta}\left(  p\delta
+\gamma+\dfrac{\beta_{2}}{1+\alpha_{2}I_{0}}\right)  \label{eqec5}%
\end{align}

We obtain the following quadratic equation:
\begin{equation}
AI_{0}^{2}+BI_{0}+C=0. \label{eqec7}%
\end{equation}
Or
\begin{equation}
I_{0}^{2}+(B/A)I_{0}+C/A=0, \label{eqec8}%
\end{equation}
where the coefficients are given by:
\begin{align}
A  &  =\alpha_{2}(\beta(\gamma+bm)+\alpha b(p\delta+\gamma))>0\nonumber\\
B  &  =\beta(\gamma+\beta_{2}+bm(1-\alpha_{2}))+b\alpha(p\delta+\gamma
+\beta_{2})+b\alpha_{2}(p\delta+\gamma)\nonumber\\
&  =\beta(\gamma+\beta_{2}+bm-bm\alpha_{2})+b\alpha(1-\mathcal{R}_{0}%
)(p\delta+\gamma+\beta_{2})+\beta mb\alpha+b\alpha_{2}(p\delta+\gamma
)\nonumber\\
C  &  =b(p\delta+\gamma+\beta_{2}-\beta m)=b(p\delta+\gamma+\beta
_{2})(1-\mathcal{R}_{0}).\label{abc}
\end{align}
Its roots are:
\begin{align}
I_{1}  &  =\dfrac{-B-\sqrt{B^{2}-4AC}}{2A}\nonumber\\
I_{2}  &  =\dfrac{-B+\sqrt{B^{2}-4AC}}{2A}. \label{I0}%
\end{align}

And using these values in (\ref{eqec5}) we obtain its respective values
\begin{align}
S_{1}  &  =\dfrac{1+\alpha I_{1}}{\beta}\left(  p\delta+\gamma+\dfrac
{\beta_{2}}{1+\alpha_{2}I_{1}}\right) \nonumber\\
S_{2}  &  =\dfrac{1+\alpha I_{2}}{\beta}\left(  p\delta+\gamma+\dfrac
{\beta_{2}}{1+\alpha_{2}I_{2}}\right)  . \label{eqec14}%
\end{align}
Then our candidate for endemic equilibria are $E_{1}=(S_{1},I_{1})$, $E_{2}%
=(S_{2},I_{2})$.

Note that $C=0$ if and only if $\mathcal{R}_{0}=1$, $C>0$ if and only if
$\mathcal{R}_{0}<1,$ and $C<0$ if and only if $\mathcal{R}_{0}>1$ .

\begin{figure}[t]
\centering
\includegraphics[scale=0.6]{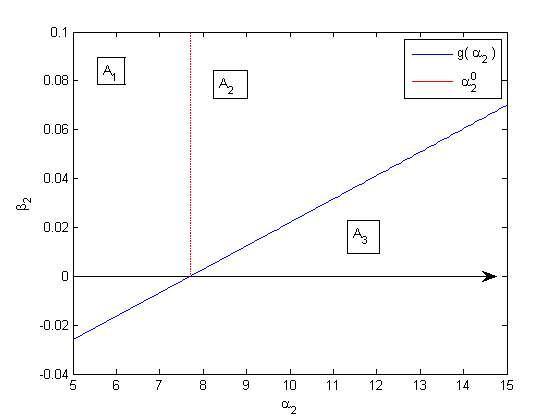}\caption{Location of the sets
$A_{1},A_{2},A_{3}$ in the plane $\alpha_{2} - \beta_{2}$, for
$\gamma=0.01,\beta=0.2,b=0.2,m=0.3,p=0.02,\delta=0.1$ }%
\label{fig1}%
\end{figure}

For $ \mathcal{R}_{0} ^{*}>1 $ we define the following sets:
\begin{align}
A_{1}  &  =\{(\beta_{2},\alpha_{2}): \beta_{2}>0,0<\alpha_{2}\leq \alpha_2^{0} ,\nonumber\\
A_{2}  &  =\{(\beta_{2},\alpha_{2}): \beta_{2}\geq g(\alpha_{2}),\alpha
_{2}> \alpha_2^{0}%
>0\},\nonumber\\
A_{3}  &  =\{(\beta_{2},\alpha_{2}): 0<\beta_{2}<g(\alpha_{2}),\alpha_{2}%
> \alpha_2^{0} >0\}.
\end{align}
Where
$$ \alpha_2^{0} = \frac
{-\beta(mb\alpha+\gamma+bm)}{b(p\delta+\gamma-\beta m)}$$
\[
g(\alpha_{2})=-\frac{1}{\beta}(b\alpha_{2}(p\delta+\gamma-\beta m)+\beta
(\gamma+bm+mb\alpha)).
\]
Define :%

\begin{align}
P_{1}  &  = 1 + \dfrac{1}{b \alpha( p \delta+ \gamma+ \beta_{2})} [ \beta(
\gamma+ \beta_{2} + bm - bm \alpha_{2} ) + \beta m b \alpha+ b \alpha_{2} ( p
\delta+ \gamma) ]\nonumber\\
R_{0}^{+}  &  = 1 - \dfrac{1}{b \alpha^{2} ( p \delta+ \gamma+ \beta_{2}%
)}\nonumber\\
&  \left[  \sqrt{- \beta\alpha( bm \alpha+ \beta_{2} + \gamma+ bm - \alpha
_{2}bm ) + \beta\alpha_{2} ( \gamma+ bm ) } - \sqrt{ \alpha_{2} ( \beta\gamma+
\beta b m + \alpha b p \delta+ \alpha b \gamma) } \right]  ^{2} .
\end{align}

Figure \ref{fig1} shows the location of these sets.

\begin{theorem}
If $\mathcal{R}_{0}>1$  the system (\ref{ruanmod}) has a unique (positive)
endemic equilibrium $E_{2}$. \label{teo10}
\end{theorem}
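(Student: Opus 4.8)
The plan is to read off the unique admissible equilibrium from the sign pattern of the quadratic $f(I):=AI^{2}+BI+C$ of \eqref{eqec7}, whose coefficients are listed in \eqref{abc}. First I would note that the hypothesis $\mathcal{R}_{0}>1$ gives $C=b(p\delta+\gamma+\beta_{2})(1-\mathcal{R}_{0})<0$, while $A>0$ unconditionally. Hence $B^{2}-4AC>B^{2}\ge 0$, so \eqref{eqec7} has two distinct real roots, and their product $C/A<0$ shows exactly one of them is positive. Since $\sqrt{B^{2}-4AC}>|B|$, inspection of \eqref{I0} gives $I_{1}<0<I_{2}$; thus $E_{1}$ is never biologically admissible and $I_{2}$ is the only candidate for an endemic state, which already yields uniqueness.

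Second, I would localize $I_{2}$ inside $(0,1)$. Since $f$ opens upward, $f(0)=C<0$, and $I_{2}$ is its unique positive zero, it suffices to verify $f(1)=A+B+C>0$. Expanding the coefficients from \eqref{abc}, the terms $\alpha_{2}\beta bm$ (from $A$) and $\beta bm$ (from $B$) cancel against $-\beta bm\alpha_{2}$ (from $B$) and $-\beta mb$ (from $C$), and what remains is
\[
f(1)=\beta\gamma(1+\alpha_{2})+\beta\beta_{2}+b(p\delta+\gamma)(1+\alpha)(1+\alpha_{2})+b\beta_{2}(1+\alpha),
\]
a sum of strictly positive quantities. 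Consequently $f$ has exactly one zero in $(0,1)$, namely $I_{2}$, so $0<I_{2}<1$.

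Finally I would check that $E_{2}=(S_{2},I_{2})$ lies in the biologically meaningful region. Formula \eqref{eqec5} gives $S_{2}=\frac{1+\alpha I_{2}}{\beta}\bigl(p\delta+\gamma+\frac{\beta_{2}}{1+\alpha_{2}I_{2}}\bigr)>0$ because every factor is positive once $I_{2}>0$. For the recovered component, at the equilibrium $\frac{d(S+I)}{dt}=0$ forces $b(m-mI_{2}-S_{2})=\gamma I_{2}+\frac{\beta_{2}I_{2}}{1+\alpha_{2}I_{2}}>0$, hence $S_{2}<m(1-I_{2})$ and therefore $S_{2}+I_{2}<m+(1-m)I_{2}\le 1$ using $I_{2}<1$; equivalently $R=1-S_{2}-I_{2}>0$, in accordance with Lemma \ref{teo1}. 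Together with the uniqueness from the first step, this establishes that $E_{2}$ is the unique positive endemic equilibrium when $\mathcal{R}_{0}>1$.

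I expect the only delicate point to be the second step: the sign of $B$ is not pinned down by the hypotheses (it may be negative), so one cannot argue about the location of the roots through $B$ alone, and the positivity of $f(1)$ surfaces only after the cancellation recorded above; everything else is immediate once the coefficients in \eqref{abc} are expanded with care.
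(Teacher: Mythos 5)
Your proof is correct, and its core coincides with the paper's: from $\mathcal{R}_{0}>1$ one gets $C<0$ with $A>0$, so the product of the roots of \eqref{eqec7} is negative, forcing two real roots of opposite sign and hence exactly one positive candidate $I_{2}$ (the paper phrases this via the Routh--Hurwitz criterion for $n=2$, but it is the same sign argument on $C/A$). Where you go beyond the paper is in verifying biological admissibility: the paper stops at "one positive root," whereas you additionally show $f(1)=A+B+C>0$ (your cancellation of the $\pm\beta bm\alpha_{2}$ and $\pm\beta bm$ terms checks out, leaving a manifestly positive sum), which localizes $I_{2}\in(0,1)$, and you then use the equilibrium identity $b(m-mI_{2}-S_{2})=\gamma I_{2}+\beta_{2}I_{2}/(1+\alpha_{2}I_{2})>0$ to conclude $S_{2}+I_{2}<1$, consistent with the invariant region of Lemma~\ref{teo1}. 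These extra steps are not needed for the uniqueness claim as the paper states it, but they close a gap the paper leaves implicit (that the unique positive root actually yields a feasible state with $S_{2}>0$ and $R=1-S_{2}-I_{2}>0$), so your version is the more complete argument; your closing caution that the sign of $B$ is not determined is also well taken, and is precisely why the product-of-roots and $f(1)$ evaluations are the right tools here.
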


\begin{proof}
If $\mathcal{R}_{0}>1$ then $C<0$, then using Routh Hurwitz criterion for $n=2$, the
quadratic equation has two real roots with different sign, $I_{1}$ and $I_{2}%
$, where $I_{1}<I_{2}$. Hence there exists a unique
positive endemic equilibrium $E_{2}=(S_{2},I_{2})$.
\end{proof}

\begin{theorem}
Let $0<\mathcal{R}_{0}\leq1$. For system (\ref{ruanmod}), if $ \mathcal{R}_{0} ^{*} \leq 1 $ then there are no positive endemic equilibria. Otherwise, if
$ \mathcal{R}_{0} ^{*}>1 $ the following propositions hold:

\begin{enumerate}
\item If $\mathcal{R}_{0}=1$ and $(\beta_{2},\alpha_{2})\in A_{3}$ the system
(\ref{ruanmod}) has a unique positive endemic equilibrium $E_2=(S_2
,I_2)$, where
\[
I_2=-B/A,\quad S_2=\dfrac{1+\alpha I_2}{\beta}\left(  p\delta
+\gamma+\dfrac{\beta_{2}}{1+\alpha_{2}I_2}\right)  .
\]
.

\item If $\max\{P_{1},R_{0}^{+}\}<\mathcal{R}_{0}<1$ and $(\beta_{2}%
,\alpha_{2})\in A_{3},$ the system (\ref{ruanmod}) has a pair of positive
endemic equilibria $E_{1},E_{2}$.

\item If $1>\mathcal{R}_{0}=R_{0}^{+}>P_{1}$ and $(\beta_{2},\alpha_{2})\in
A_{3},$ the system (\ref{ruanmod}) has a unique positive endemic equilibrium
$E_{1}=E_{2}$.

\item If $1>\mathcal{R}_{0}=P_{1}$ and $(\beta_{2},\alpha_{2})\in A_{3},$ the
system (\ref{ruanmod}) has no positive endemic equilibria.

\item If $0<\mathcal{R}_{0}\leq1$ and $(\beta_{2},\alpha_{2})\in A_{1}\cup
A_{2},$ the system (\ref{ruanmod}) has no positive endemic equilibria.

\item If $(\beta_{2},\alpha_{2})\in A_{3}$ and $0<\mathcal{R}_{0}<\max
(R_{0}^{+},P_{1})<1,$ then there are no positive endemic equilibria.
\end{enumerate}

\label{teo4}
\end{theorem}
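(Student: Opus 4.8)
The plan is to derive every conclusion from the sign pattern of the quadratic $q(I):=AI^{2}+BI+C$ of \eqref{eqec7}: an endemic equilibrium exists iff $q$ has a positive root $I_{0}$, in which case $S_{0}$ is recovered from \eqref{eqec5}--\eqref{eqec14} and is automatically positive because every factor there is. By \eqref{abc}, $A>0$ always and $\sgn C=\sgn(1-\mathcal{R}_{0})$, so throughout the theorem $C\ge 0$. If moreover $\mathcal{R}_{0}^{*}\le 1$, i.e.\ $p\delta+\gamma-\beta m\ge 0$, then regrouping the $\alpha_{2}$-terms of $B$ gives $B=\beta(\gamma+\beta_{2}+bm)+b\alpha(p\delta+\gamma+\beta_{2})+b\alpha_{2}(p\delta+\gamma-\beta m)$, a sum of nonnegative terms with $\beta(\gamma+\beta_{2}+bm)>0$; hence $B>0$, and by Vieta the roots of $q$ have negative sum and nonnegative product, so none is positive. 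This proves the first assertion; from now on assume $\mathcal{R}_{0}^{*}>1$.

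Next I would record the identities governing $P_{1}$ and $R_{0}^{+}$. Matching the $\mathcal{R}_{0}$-form of $B$ in \eqref{abc} with the definition of $P_{1}$ yields the factorization $B=b\alpha(p\delta+\gamma+\beta_{2})(P_{1}-\mathcal{R}_{0})$, so $\sgn B=\sgn(P_{1}-\mathcal{R}_{0})$; in particular $B<0\iff\mathcal{R}_{0}>P_{1}$. Expanding $P_{1}<1$ and again collecting $\alpha_{2}$-terms gives $P_{1}<1\iff\beta_{2}<g(\alpha_{2})$; since $\mathcal{R}_{0}^{*}>1$ makes $\alpha_{2}^{0}>0$, $g$ strictly increasing and $g(\alpha_{2}^{0})=0$, this identifies $\{P_{1}<1\}=A_{3}$ and $\{P_{1}\ge 1\}=A_{1}\cup A_{2}$. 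Assertion 5 follows immediately: on $A_{1}\cup A_{2}$ one has $P_{1}\ge 1\ge\mathcal{R}_{0}$, hence $B\ge 0$ together with $C\ge 0$, $A>0$, so $q$ has no positive root.

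The remaining cases all live on $A_{3}$ and need the sign of $\Delta:=B^{2}-4AC$. From the identities for $B$ and $C$ one gets $\Delta=b(p\delta+\gamma+\beta_{2})\bigl[W(P_{1}-\mathcal{R}_{0})^{2}-4Y(1-\mathcal{R}_{0})\bigr]$, where $W:=b\alpha^{2}(p\delta+\gamma+\beta_{2})$ and $Y:=A$; a direct check shows the two radicands in $R_{0}^{+}$ are exactly $Y$ and $X:=Y+W(1-P_{1})$, so on $A_{3}$ (where $P_{1}<1$) we have $X>Y>0$. Writing $P_{1}-\mathcal{R}_{0}=(1-\mathcal{R}_{0})-(1-P_{1})$ and treating $W(P_{1}-\mathcal{R}_{0})^{2}\ge 4Y(1-\mathcal{R}_{0})$ as a quadratic in $W(1-\mathcal{R}_{0})$ with roots $(\sqrt{X}\mp\sqrt{Y})^{2}$, one finds that on $A_{3}$, for $\mathcal{R}_{0}\le 1$,
\[
\Delta\ge 0\iff \mathcal{R}_{0}\le R_{0}^{-}\ \text{or}\ \mathcal{R}_{0}\ge R_{0}^{+},\qquad R_{0}^{\pm}:=1-\tfrac{1}{W}(\sqrt{X}\mp\sqrt{Y})^{2},
\]
with $\Delta=0$ exactly at the endpoints, $R_{0}^{-}<R_{0}^{+}\le 1$, and $R_{0}^{-}<P_{1}$. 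Now I run the six cases on $A_{3}$. If $\mathcal{R}_{0}=1$ (assertion 1): $C=0$, so the roots are $0$ and $-B/A$, and $P_{1}<1=\mathcal{R}_{0}$ forces $B<0$, giving the unique $E_{2}$ with $I_{2}=-B/A>0$ and $S_{2}$ from \eqref{eqec14}. If $\mathcal{R}_{0}<1$: $C>0$, so the product of roots is positive, and $q$ has a positive root iff $B<0$ and $\Delta\ge 0$ (and then both roots, with multiplicity, are positive); since $R_{0}^{-}<P_{1}$, this amounts to $\mathcal{R}_{0}\ge\max\{P_{1},R_{0}^{+}\}$. Hence: $\mathcal{R}_{0}>\max\{P_{1},R_{0}^{+}\}$ yields two positive roots $I_{1}<I_{2}$ (assertion 2); $\mathcal{R}_{0}=R_{0}^{+}>P_{1}$ yields $\Delta=0$, hence the double root $I_{1}=I_{2}=-B/(2A)>0$ (assertion 3); $\mathcal{R}_{0}=P_{1}<1$ yields $B=0$, $C>0$, so the roots $\pm\sqrt{-C/A}$ are not real (assertion 4); and $\mathcal{R}_{0}<\max\{P_{1},R_{0}^{+}\}$ means either $\mathcal{R}_{0}\le P_{1}$, so $B\ge 0$ and no positive root, or $P_{1}<\mathcal{R}_{0}<R_{0}^{+}$, so $R_{0}^{-}<\mathcal{R}_{0}<R_{0}^{+}$, $\Delta<0$, and no real root (assertion 6).

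The main obstacle is the algebra in the third step: checking that clearing radicals from the discriminant inequality really reproduces the unwieldy expression defining $R_{0}^{+}$ (and its companion $R_{0}^{-}$), and verifying $R_{0}^{-}<P_{1}$ so the spurious branch $\mathcal{R}_{0}\le R_{0}^{-}$ is never relevant once $B<0$. Everything else is bookkeeping with Vieta's formulas once the sign dictionaries are in place ($C$ against $\mathcal{R}_{0}$ vs.\ $1$; $B$ against $\mathcal{R}_{0}$ vs.\ $P_{1}$; $\Delta$ against $\mathcal{R}_{0}$ vs.\ $R_{0}^{\pm}$; and $P_{1}$ vs.\ $1$ against membership in $A_{3}$ or $A_{1}\cup A_{2}$); one should also double-check that the six hypotheses exhaust $0<\mathcal{R}_{0}\le 1$, which they do after splitting according to the relative order of $P_{1}$ and $R_{0}^{+}$.
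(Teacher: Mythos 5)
Your proof is correct and follows essentially the same route as the paper: read off existence and positivity of the roots of $AI^{2}+BI+C$ from the sign dictionaries $C\leftrightarrow(1-\mathcal{R}_{0})$, $B\leftrightarrow(P_{1}-\mathcal{R}_{0})$, $\Delta\leftrightarrow$ the thresholds $R_{0}^{\pm}$, and $A_{3}\leftrightarrow\{P_{1}<1\}$, then run the six cases. The only difference is cosmetic but welcome: your identities $B=b\alpha(p\delta+\gamma+\beta_{2})(P_{1}-\mathcal{R}_{0})$ and $X=Y+W(1-P_{1})$ make the realness of $R_{0}^{\pm}$ and the inequality $R_{0}^{-}<P_{1}$ immediate, where the paper establishes the former by a contradiction argument for $\Delta_{2}\geq 0$ and the latter by a looser $\epsilon$-comparison.
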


\begin{proof}

If $0<\mathcal{R}_{0}\leq1,$ then $C\geq0$, so the roots of the
equation $AI^{2}+BI+C=0$ are not real with different sign, but real with equal
signs, complex conjugate or some of them are zero. If endemic equilibria exist
and are positive, it is necessary that $B<0$. After some calculations we can see that:
\begin{align}
&  B<0 \Leftrightarrow\mathcal{R}_{0}>1+\dfrac{\beta(\gamma+\beta_{2}%
+bm-bm\alpha_{2})+\beta mb\alpha+b\alpha_{2}(p\delta+\gamma)}{b\alpha
(p\delta+\gamma+\beta_{2})}:=P_{1}.
\end{align}
From the assumption that $\mathcal{R}_{0}\leq1$ then $P_{1}<1$, hence the
expression $\beta(\gamma+\beta_{2}+bm-bm\alpha_{2})+\beta mb\alpha+b\alpha
_{2}(p\delta+\gamma)$ must be negative, this happens if and only if %
\begin{gather}
\beta_{2}<-\frac{1}{\beta}(b\alpha_{2}(p\delta+\gamma-\beta
m)+\beta(\gamma+bm+mb\alpha))=g(\alpha_{2}).
\end{gather}
If $ \mathcal{R}_{0} ^{*} \leq 1 $ then $-\frac{1}{\beta}(b\alpha_{2}%
(p\delta+\gamma-\beta m)+\beta(\gamma+bm+mb\alpha))<0$ and it is not possible
to find a value of $\beta_{2}$ fulfilling the previous inequality, therefore
there are no  positive endemic equilibria. \par
Now, if $ \mathcal{R}_{0} ^{*}>1 $ we have that.

\begin{enumerate}
\item If $\mathcal{R}_{0}=1$ then $C=0$ and the equation (\ref{eqec7}) is
transformed into
\begin{equation}
AI_{0}^{2}+BI_{0}=0,
\end{equation}

with $A>0$. Its roots are $I_{1}=0$ and $I_2=-B/A,$ and there
exists a unique endemic equilibrium that is positive if and only if $B<0$,  that is given by $E_2=(S_2,I_2)$, where
\begin{align}
I_2 &  =-B/A\nonumber\\
S_2 &  =\dfrac{1+\alpha I_2}{\beta}\left(  p\delta+\gamma+\dfrac{\beta
_{2}}{1+\alpha_{2}I_2}\right)  . \label{eqec13}%
\end{align}
Note that if $\alpha_{2}> \alpha_2^{0} \quad
\text{and} \quad \mathcal{R}_{0} ^{*}>1 $ then $g(\alpha_{2})>0$.

Hence  $A_{3}$ is nonempty and its elements satisfy $B<0$, therefore if
$(\beta_{2},\alpha_{2})\in A_{3}$ there exists a unique positive endemic
equilibrium $E_2$.

\item If $\mathcal{R}_{0}<1$ then $C>0$ and the roots of the quadratic equation
for $I_{0}$  must be real of
equal sign or complex conjugate. By the previous part we know that if
$(\beta_{2},\alpha_{2})\in A_{3}$ then $P_{1}<1$, moreover if $\mathcal{R}%
_{0}>P_{1}$ then $B<0$ and therefore both roots must have positive real part.
Finally, to assure that equilibria are both real, we demand that $\Delta\geq0$
. Computing $\Delta$:%
\begin{align}
\Delta &  =B^{2}-4AC\nonumber\\
&  =A_{2}\mathcal{R}_{0}^{2}+B_{2}\mathcal{R}_{0}+C_{2}=\Delta(\mathcal{R}%
_{0}),
\end{align}
where:
\begin{align}
A_2 &= {\alpha}^{2}{b}^{2}\left(  p\delta+\gamma+\mathit{\beta_{2}}\right)
^{2}\\
B_2 &= -2\,[\beta\,\left(  \gamma+\mathit{\beta_{2}}+bm\left(  1-\mathit{\alpha
_{2}}\right)  \right)  +\alpha\,b\left(  p\delta+\gamma+\mathit{\beta_{2}%
}\right)  +\beta\,mb\alpha\nonumber\\
&  +b\mathit{\alpha_{2}}\,\left(  p\delta+\gamma\right)  ]\alpha\,b\left(
p\delta+\gamma+\mathit{\beta_{2}}\right)  +4\,\mathit{\alpha_{2}}\,\left(
\beta\,\left(  \gamma+bm\right)  \alpha\,b\left(  p\delta+\gamma\right)
\right) \nonumber \\
& b\left(  p\delta+\gamma+\mathit{\beta_{2}}\right) \\
C_2 &= \left(  \beta\,\left(  \gamma+\mathit{\beta_{2}}+bm\left(
1-\mathit{\alpha_{2}}\right)  \right)  +\alpha\,b\left(  p\delta
+\gamma+\mathit{\beta_{2}}\right)  +\beta\,mb\alpha+b\mathit{\alpha_{2}%
}\,\left(  p\delta+\gamma\right)  \right)  ^{2}\nonumber\\
&  -4\,\mathit{\alpha_{2}}\,\left(  \beta\,\left(  \gamma+bm\right)
+\alpha\,b\left(  p\delta+\gamma\right)  \right)  b\left(  p\delta
+\gamma+\mathit{\beta_{2}}\right).
\end{align}
The previous expression is a quadratic function of $\mathcal{R}_{0}$. To
establish the region where $\Delta\geq0$, it is necessary to know how the roots
of $\Delta(\mathcal{R}_{0})$ behave. The discriminant of the quadratic
function $\Delta(\mathcal{R}_{0})$ is
\begin{align}
\Delta_{2}  &  =-16\,\mathit{\alpha_{2}}\,{b}^{2}\beta\,\left(  p\delta
+\gamma+\mathit{\beta_{2}}\right)  ^{2}\left(  \beta\,\gamma+\beta
\,bm+\alpha\,bp\delta+\alpha\,b\gamma\right) \nonumber\\
&  \left(  \alpha(\alpha bm+\beta_{2}+\gamma+bm)-\alpha_{2}(\gamma+bm+\alpha
bm)\right)  . \label{ec15}%
\end{align}
If we assume that $\Delta_{2}<0,$ then $\alpha_{2}<\frac{\alpha(bm\alpha
-\beta_{2}+\gamma+bm)}{\gamma+bm+\alpha bm}$ and in this case we have that:%
\begin{align}
\gamma+\beta_{2}+bm-bm\alpha_{2}+bm\alpha>\frac{2\beta_{2}\alpha
bm+(\gamma+bm)(\gamma+\beta_{2}+bm+bm\alpha)}{\gamma+bm+\alpha bm}>0.
\end{align}
So we get that $P_{1}>1>\mathcal{R}_{0}$, which is a contradiction with
the assumption in this part, therefore $\Delta_{2}\geq0$ and in consequence
$\Delta(\mathcal{R}_{0})$ has two real roots,
\begin{align}
R_{0}^{-}  &  =\dfrac{-B_{2}-\sqrt{\Delta_{2}}}{2A_{2}}\nonumber\\
&  =1-\dfrac{1}{b\alpha^{2}(p\delta+\gamma+\beta_{2})}[\sqrt{-\beta
(\alpha(bm\alpha+\beta_{2}+\gamma+bm-bm\alpha_{2})-\alpha_{2}(\gamma
+bm))}\nonumber\\
&  +\sqrt{\alpha_{2}(\beta(\gamma+bm)+\alpha b(p\delta+\gamma))}%
]^{2},\nonumber\\
R_{0}^{+}  &  =\dfrac{-B_{2}+\sqrt{\Delta_{2}}}{2A_{2}}\nonumber\\
&  1-\dfrac{1}{b\alpha^{2}(p\delta+\gamma+\beta_{2})}[\sqrt{-\beta
(\alpha(bm\alpha+\beta_{2}+\gamma+bm-bm\alpha_{2})-\alpha_{2}(\gamma
+bm))}\nonumber\\
&  -\sqrt{\alpha_{2}(\beta(\gamma+bm)+\alpha b(p\delta+\gamma))}]^{2}.%
\end{align}
Note that due to the positivity of $\Delta_{2}$ and (\ref{ec15}),
we have that
 $$-\beta(\alpha(bm\alpha+\beta_{2}+\gamma+bm-bm\alpha
_{2})-\alpha_{2}(\gamma+bm))$$
 is positive, allowing its roots to be well
defined. Analyzing the derivative of
$\Delta(\mathcal{R}_{0})$ we have that $$\Delta^{\prime}(R_{0}^{+}%
)=\sqrt{\Delta_{2}}>0 \quad \text{and} \quad \Delta^{\prime}(R_{0}^{-})=-\sqrt{\Delta_{2}}<0,$$

moreover $R_{0}^{-}<R_{0}^{+}$ making $\Delta$ positive for $\mathcal{R}%
_{0}>R_{0}^{+}$ or $\mathcal{R}_{0}<R_{0}^{-}$. Nevertheless $$R_{0}%
^{-}=1+\frac{1}{b\alpha(p\delta+\gamma+\beta_{2})}(\beta(\gamma+\beta
_{2}+bm-bm\alpha_{2}+bm \alpha))-\epsilon,$$ while $$P_{1}=1+\frac
{1}{b\alpha(p\delta+\gamma+\beta_{2})}(\beta(\gamma+\beta_{2}+bm-bm\alpha
_{2}+bm \alpha))+\epsilon_{2},$$
with $ \epsilon, \epsilon_{2}>0$, making $R_{0}^{-}<P_{1}<R_{0}$.
Therefore for $\mathcal{R}_{0}>\max(P_{1},R_{0}^{+})$, we have that there exists
 two positive endemic equilibria
$E_{1},E_{2}$, proving this part.

\item If $(\beta_{2},\alpha_{2})\in A_{3}$ then $P_{1}<1$. If $1>\mathcal{R}%
_{0}>P_{1},$ then we have that $B<0$ and $C>0$, therefore we have a pair of
roots of the quadratic for $I$ with positive real part. In the previous part
it was proven that for $P_{1}<1$ the discriminant $\Delta_{2}\geq0$ and both
roots $R_{0}^{+},R_{0}^{-}$ are real and less than one. If $\mathcal{R}%
_{0}=R_{0}^{+}$ then $\Delta=0$ and both roots are fused in one $I_{1}%
=-B/2A=I_{2}$. Therefore we have a unique positive endemic equilibrium
$E_{1}=E_{2}$.

\item If $(\beta_{2},\alpha_{2})\in A_{3}$ then $P_{1}<1$. If $\mathcal{R}%
_{0}=P_{1}<1$ then $C>0,$ implying that the roots are complex conjugate or
real of the same sign. Being $\mathcal{R}_{0}=P_{1}$ then $B=0,$ implying that
both roots have real part equal to zero, therefore there are no positive
endemic equilibria.

\item If $0<\mathcal{R}_{0}\leq1$ and $(\beta_{2},\alpha_{2})\in A_{1}\cup
A_{2}$ then $P_{1}\geq1$, therefore $\mathcal{R}_{0}\leq P_{1}$ , $B\geq0,$
and $C\geq0$. Hence there are two roots with real part zero or negative, which
are not positive equilibria.

\item If $(\beta_{2},\alpha_{2})\in A_{3}$ we have that $P_{1}<1$ and the
roots of the discriminant $R_{0}^{+},R_{0}^{-}$ are real, in addition that
$R_{0}^{-}<P_{1}$ and $R_{0}^{+}<1$ by definition of this case. If
$0<\mathcal{R}_{0}<\max\{R_{0}^{+},P_{1}\}<1,$ then $C>0$ and the roots
$I_{2},I_{3}$ are complex conjugate or real with the same sign. If
$\mathcal{R}_{0}<P_{1}$ then $B>0,$ and the roots have negative real part, so
there are not positive endemic equilibria. If $0<\mathcal{R}_{0}<R_{0}^{+}$
and $\mathcal{R}_{0}>R_{0}^{-}$, then $\Delta<0$ and the roots are complex
conjugate, therefore there is not real endemic equilibria. If $0<\mathcal{R}
_{0}<R_{0}^{+}$ and $\mathcal{R}_{0}\leq R_{0}^{-}<P_{1},$ then it reduces to
the first case in which there are not positive endemic equilibria.
\end{enumerate}

\begin{figure}[t]
\centering
\includegraphics[scale=0.6]{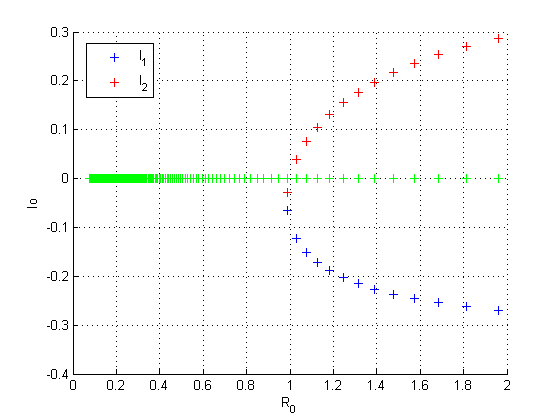}\caption{Graph of
$\mathcal{R}_{0}$ versus the values $I$ of equilibria.  Parameter values used
are $\alpha=0.4,\alpha_{2}=3.8,\beta=0.2,b=0.2,\gamma=0.01,\delta
=0.01,p=0.02,m=0.1$. In this example
$\beta_{2}$ varies from $0$ to $0.025$, therefore $\mathcal{R}_{0}$ varies
between $0.5682$ and $1.9682$. $g(\alpha_2)=   -0.0017$ and $ \alpha_2^{0} =  3.8776 $, so $ ( \beta_2, \alpha_2 ) \in A_ 1 \cup A_2$.  \textit{Forward bifurcation} can be observed
in $\mathcal{R}_{0}=1$.}
\label{fig12}%
\end{figure}

\begin{figure}[t]
\centering
\includegraphics[scale=0.6]{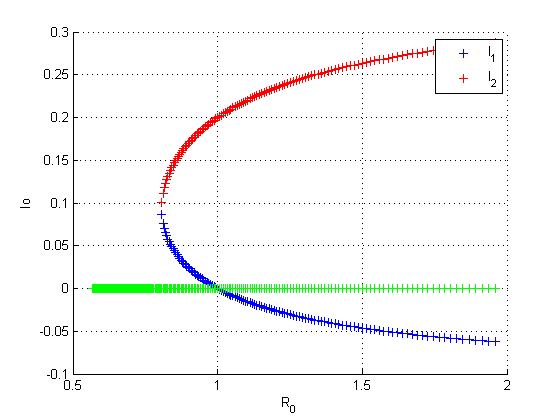}\caption{Graph of
$\mathcal{R}_{0}$ versus the values $I$ of equilibria. In this example
$\beta_{2}$ varies from $0$ to $0.025$ therefore $\mathcal{R}_{0}$ varies
between $0.5682$ and $1.9682$. Parameter values used
are $\alpha=0.4,\alpha_{2}=16,\beta=0.2,b=0.2,\gamma=0.01,\delta
=0.01,p=0.02,m=0.1$. $g(\alpha_2)=   0.1188$ and $ \alpha_2^{0} =  3.8776 $, so $ ( \beta_2, \alpha_2 ) \in A_3 $ . \textit{Backward bifurcation} can be observed
in $\mathcal{R}_{0}=1$ and the existence of two positive endemic equilibria
whenever $\max\{P_{1},R_{0}^{+}\}<\mathcal{R}_{0}<1$.} %
\label{fig2}%
\end{figure}

Theorem \ref{teo4} gives us a complete  scenario of the existence of endemic equilibria. When $\mathcal{R}_{0} ^{*} \leq  1 $ we have that $\mathcal{R}_{0} <1 $, it follows from the fact that $\mathcal{R}_{0} < \mathcal{R}_{0} ^{*}$ whenever $ \beta_2>0 $; then system \ref{ruanmod} has only a disease free equilibrium and no endemic equilibria. \par
Otherwise, when $\mathcal{R}_{0} ^{*}>1$ . If $ ( \beta_2, \alpha_2 ) \in A_1 \cup A_2 $ then we have no endemic equilibria for $0<\mathcal{R}_{0}<1 $ and  a unique endemic equilibria $E_2$ when $\mathcal{R}_{0} >1$, so there exists a forward bifurcation in $\mathcal{R}_{0} =1$ from the disease free equilibrium to $E_2$ (see figure \ref{fig12} ). If $ ( \beta_2, \alpha_2 ) \in A_3$ there exist two positive endemic equilibria whenever $\max\{P_{1},R_{0}^{+}\}<\mathcal{R}_{0}<1$ ( $P_1$ and $\mathcal{R}_{0} ^{+}$ depend on $ \beta_2 $), we can observe the backward bifurcation of the equilibrium $E$ to two endemic equilibria (see figure \ref{fig2} ).

As an immediate consequence of the previous theorem we have that if
$\mathcal{R}_{0}>1$ there exists a unique positive endemic equilibrium, while
if $\mathcal{R}_{0}<1$ and the conditions of the second part are fulfilled,
there exist two positive endemic equilibria. Hence we have the following corollary:
\end{proof}

\begin{corollary}
If $\mathcal{R}_{0}=1$, $ \mathcal{R}_{0} ^{*}>1 $ and $(\beta,\alpha_{2})\in A_{3}$, system
(\ref{ruanmod}) has a backward bifurcation of the disease-free equilibrium $E$.
\end{corollary}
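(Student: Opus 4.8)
\emph{Proof plan.} The plan is to obtain the statement directly from the structural description of the endemic equilibria already in hand, namely Theorem~\ref{teo10} and Theorem~\ref{teo4}, since a backward bifurcation of $E$ at the threshold $\mathcal{R}_0=1$ means precisely that the branch of positive endemic equilibria that detaches from the disease-free equilibrium at $\mathcal{R}_0=1$ extends to values $\mathcal{R}_0<1$, so that endemic states coexist with $E$ on a left neighbourhood of $\mathcal{R}_0=1$ (in contrast with the forward situation of Figure~\ref{fig12}). Concretely, I would fix $(\beta_2,\alpha_2)\in A_3$ and $\mathcal{R}_0^{*}>1$ and track the smaller root $I_1$ of $AI_0^2+BI_0+C=0$ as a function of $\mathcal{R}_0$: by the sign analysis of $C$ in \eqref{abc}, for $\mathcal{R}_0>1$ one has $C<0$ and $I_1<0<I_2$ (Theorem~\ref{teo10}); at $\mathcal{R}_0=1$ one has $C=0$, $I_1=0$ and $I_2=-B/A>0$, and a short computation using $\beta m=\beta_2+p\delta+\gamma$ shows that $E_1=(S_1,0)=(m,0)=E$ (Theorem~\ref{teo4}, part~1); and for $\mathcal{R}_0$ slightly below $1$ one has $C>0$ with two positive roots $0<I_1\le I_2$ (Theorem~\ref{teo4}, part~2).

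The one point that genuinely needs checking is that the third regime is available on a true left neighbourhood of $\mathcal{R}_0=1$, i.e. that the interval $\bigl(\max\{P_1,R_0^{+}\},\,1\bigr)$ from part~2 of Theorem~\ref{teo4} is nonempty. This was already secured inside the proof of that theorem: for $(\beta_2,\alpha_2)\in A_3$ one has $P_1<1$ (equivalently $\beta_2<g(\alpha_2)$ with $g(\alpha_2)>0$, which holds because $\alpha_2>\alpha_2^{0}$ and $\mathcal{R}_0^{*}>1$), and on $A_3$ the discriminant $\Delta_2\ge0$ with both roots $R_0^{-}<R_0^{+}$ real and at most $1$ — indeed $R_0^{+}=1-\frac{1}{b\alpha^2(p\delta+\gamma+\beta_2)}\bigl[\sqrt{-\beta(\cdots)}-\sqrt{\alpha_2(\cdots)}\bigr]^{2}<1$ off the degenerate locus where the bracketed difference vanishes. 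Hence there is $\varepsilon>0$ such that for every $\mathcal{R}_0\in(1-\varepsilon,1)$ the system has two positive endemic equilibria $E_1,E_2$.

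Assembling the three regimes: the map $\mathcal{R}_0\mapsto I_1(\mathcal{R}_0)$ is continuous, negative for $\mathcal{R}_0>1$, zero at $\mathcal{R}_0=1$ where the corresponding equilibrium $E_1$ coincides with $E=(m,0)$, and strictly positive for $\mathcal{R}_0\in(1-\varepsilon,1)$. Thus the branch of equilibria passing through $E$ at $\mathcal{R}_0=1$ bends into the region $\mathcal{R}_0<1$; together with the persistence of a disease-free equilibrium (whose stability for $\mathcal{R}_0<1$ is taken up in Section~3) this is, by definition, a backward bifurcation of $E$ at $\mathcal{R}_0=1$, which is the assertion of the corollary, and it is the phenomenon displayed in Figure~\ref{fig2}.

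I do not expect a real obstacle: this is essentially bookkeeping on top of Theorems~\ref{teo10} and~\ref{teo4}. The only items deserving care are the nonemptiness of $\bigl(\max\{P_1,R_0^{+}\},1\bigr)$, i.e. the strict inequalities $P_1<1$ and $R_0^{+}<1$ on $A_3$ (both already contained in the proof of Theorem~\ref{teo4}), and being explicit about which notion of backward bifurcation is intended here, since the stability half of the picture is only established later in Section~3.
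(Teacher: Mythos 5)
Your proposal is correct and follows essentially the same route as the paper's proof: both arguments rest on the nonemptiness of the interval $\bigl(\max\{P_{1},R_{0}^{+}\},1\bigr)$ for $(\beta_{2},\alpha_{2})\in A_{3}$ (so that case 2 of Theorem \ref{teo4} yields two positive endemic equilibria just below $\mathcal{R}_{0}=1$), combined with the observation that the branch $I_{1}$ equals zero at $\mathcal{R}_{0}=1$ and becomes negative for $\mathcal{R}_{0}>1$. Your added check that $E_{1}$ coincides with $E=(m,0)$ at the threshold is a correct small refinement of the same bookkeeping.
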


\begin{proof}
First we note that if $(\beta_{2},\alpha_{2})\in A_{3}$ then $R_{0}^{+}$ is
real less than one and $P_{1}<1$, therefore we can find a neighborhood of
points in the interval $(\max\{R_{0}^{+},P_{1}\},1)$. By  case  2 of
theorem, if $\mathcal{R}_{0}$ lies in this neighborhood there exist two
positive endemic equilibria $E_{1},E_{2}$; for $\mathcal{R}_{0}=1$ there
exists a unique positive endemic equilibrium $E_2$, while the other endemic
equilibrium becomes zero. Finally for $\mathcal{R}_{0}>1$ there exists a
unique positive endemic equilibrium as the zero "endemic" equilibrium becomes
negative.
\end{proof}

\section{Characteristic Equation and Stability}

The characteristic equation of the linearization of system (\ref{ruanmod}) in
the equilibrium $(S_{0},I_{0})$ is given by:
\begin{equation}
\det(DF-\lambda I),
\end{equation}
where
\begin{equation}
DF=\left(
\begin{matrix}
\frac{\partial f_{1}}{\partial S} & \frac{\partial f_{1}}{\partial I}\\
\frac{\partial f_{2}}{\partial S} & \frac{\partial f_{2}}{\partial I}%
\end{matrix}
\right)  .
\end{equation}
Matrix is evaluated in the equilibrium $(S_{0},I_{0})$. Functions
$f_{1},f_{2}$ are the following:
\begin{align}
f_{1}  &  =-\dfrac{\beta SI}{1+\alpha I}-bS+bm(1-I)+p\delta I\\
f_{2}  &  =\dfrac{\beta SI}{1+\alpha I}-p\delta I-\gamma I-\dfrac{\beta_{2}%
I}{1+\alpha_{2}I}.
\end{align}

Computing the matrix $DF$ we obtain:%

\begin{equation}
DF (S,I) = \left(
\begin{matrix}
\dfrac{-\beta I}{1 + \alpha I } - b & \dfrac{-\beta S }{ (1+ \alpha I)^{2} }
-bm + p \delta\\
\dfrac{\beta I}{1 + \alpha I } & \dfrac{\beta S}{ (1+ \alpha I)^{2} } - p
\delta- \gamma- \dfrac{ \beta_{2} }{(1+ \alpha_{2} I)^{2}}%
\end{matrix}
\right).
\end{equation}

\subsection{Stability of \textit{disease free }equilibrium}

For the disease free equilibrium $E=(m,0)$ the Jacobian matrix is:
\[
DF(m,0)=\left(
\begin{matrix}
-b & -\beta m-bm+p\delta\\
0 & \beta m-p\delta-\gamma-\beta_{2}%
\end{matrix}
\right).
\]

\begin{theorem}
If $\mathcal{R}_{0}<1$ then the equilibrium $E=(m,0)$ of model (\ref{ruanmod})
is locally asymptotically stable, while if $\mathcal{R}_{0}>1$ then it is
unstable. \label{teo5}
\end{theorem}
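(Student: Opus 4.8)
The plan is to read off the spectrum of the Jacobian $DF(m,0)$ directly, since the matrix displayed just above the statement is block (indeed upper) triangular. Its characteristic polynomial therefore factors immediately as $(\lambda + b)\bigl(\lambda - (\beta m - p\delta - \gamma - \beta_2)\bigr)$, so the two eigenvalues are $\lambda_1 = -b$ and $\lambda_2 = \beta m - p\delta - \gamma - \beta_2$.

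Next I would rewrite $\lambda_2$ in terms of $\mathcal{R}_0$. Factoring out the positive quantity $p\delta + \gamma + \beta_2$ gives
\[
\lambda_2 = (p\delta + \gamma + \beta_2)\left(\frac{\beta m}{p\delta + \gamma + \beta_2} - 1\right) = (p\delta + \gamma + \beta_2)(\mathcal{R}_0 - 1),
\]
using the definition $\mathcal{R}_0 = \beta m/(p\delta + \gamma + \beta_2)$. Since $b>0$ and $p\delta + \gamma + \beta_2 > 0$ by the standing positivity assumptions on the parameters, the sign of $\lambda_2$ is exactly the sign of $\mathcal{R}_0 - 1$.

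From here the conclusion is immediate. If $\mathcal{R}_0 < 1$ then $\lambda_2 < 0$, so both eigenvalues of $DF(m,0)$ have negative real part; by the Hartman–Grobman theorem (or the standard linearization theorem for equilibria of ODEs) $E=(m,0)$ is locally asymptotically stable. If $\mathcal{R}_0 > 1$ then $\lambda_1 = -b < 0 < \lambda_2$, so $E$ is a hyperbolic saddle and hence unstable. I would also remark that the borderline case $\mathcal{R}_0 = 1$ is excluded from the statement since then $\lambda_2 = 0$ and linearization is inconclusive.

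There is essentially no hard step here: the only thing to be careful about is the algebraic identity relating $\lambda_2$ to $\mathcal{R}_0$, and making explicit that $p\delta+\gamma+\beta_2>0$ so that dividing and multiplying by it is legitimate. Everything else is a direct appeal to the triangular structure of the Jacobian and the standard stability-by-linearization principle.
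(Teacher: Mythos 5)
Your proof is correct and follows essentially the same route as the paper: read the eigenvalues $\lambda_1=-b$ and $\lambda_2=\beta m-p\delta-\gamma-\beta_2$ off the triangular Jacobian and conclude via linearization (Hartman--Grobman). Your explicit rewriting $\lambda_2=(p\delta+\gamma+\beta_2)(\mathcal{R}_0-1)$ is a slightly cleaner way of stating the sign equivalence the paper phrases as a chain of inequalities, but the argument is the same.
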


\begin{proof}
The characteristic equation for the equilibrium $E$ is given by
\begin{align}
P(\lambda)  &  =\det(DF(m,0)-\lambda I_{2x2})\nonumber\\
&  =\det\left(
\begin{matrix}
-b-\lambda & -\beta m-bm+p\delta\\
0 & \beta m-p\delta-\gamma-\beta_{2}-\lambda
\end{matrix}
\right) \nonumber\\
&  =(-b-\lambda)(\beta m-p\delta-\gamma-\beta_{2}-\lambda). \label{carec1}%
\end{align}
The equation (\ref{carec1}) has two real roots $\lambda_{1}=-b$ and
$\lambda_{2}=\beta m-p\delta-\gamma-\beta_{2}$. By Hartman-Grobman's theorem,
if the roots of (\ref{carec1}) have non-zero real part then the solutions of
system (\ref{ruanmod}) and its linearization are qualitatively equivalent. If
both roots have negative real part then the equilibrium $E$ is locally
asymptotically stable, whilst if any of the roots has positive real part the
equilibrium is unstable. Clearly $\lambda_{1}<0,$ but $\lambda_{2}<0$ if and
only if
\[
\beta m-p\delta-\gamma<\beta_{2},
\]
if and only if $\mathcal{R}_{0}<1$.
\end{proof}

According to the previous theorem and theorem \ref{teo4} we obtain the
following result for the global stability of equilibrium $E$\ :

\begin{theorem}
If $0<\mathcal{R}_{0}<1$ and one of the following conditions holds:

\begin{itemize}
\item $ \mathcal{R}_{0} ^{*} \leq 1 $.

\item $\mathcal{R}_{0}=P_{1}$ and $(\beta_{2},\alpha_{2})\in A_{3}$.

\item $(\beta_{2}, \alpha_{2}) \in A_{1} \cup A_{2} $.

\item $(\beta_{2},\alpha_{2})\in A_{3}$ and $0<\mathcal{R}_{0}<\max\{R_{0}%
^{+},P_{1}\}$.
\end{itemize}

Then equilibrium $E$ of system (\ref{ruanmod}) is globally asymptoticaly stable.
\label{teo6}
\end{theorem}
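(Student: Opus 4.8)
The plan is to exploit the planarity of \eqref{ruanmod}. Theorem \ref{teo5} already provides local asymptotic stability of $E$ when $\mathcal{R}_0<1$, and each of the four hypotheses is exactly one of the situations in which Theorem \ref{teo4} excludes endemic equilibria; a Poincar\'e--Bendixson argument on the compact feasible triangle should then upgrade local stability to global asymptotic stability, provided periodic orbits can be ruled out.

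I would first fix the geometric set-up. By Lemma \ref{teo1} the closed triangle $\Omega=\{(S,I):S\ge0,\ I\ge0,\ S+I\le1\}$ is compact and positively invariant, and it contains the forward orbit of every solution with $S_0>0$, $I_0>0$. In each of the four listed cases ($\mathcal{R}_0^*\le1$; $\mathcal{R}_0=P_1$ with $(\beta_2,\alpha_2)\in A_3$; $(\beta_2,\alpha_2)\in A_1\cup A_2$; and $(\beta_2,\alpha_2)\in A_3$ with $\mathcal{R}_0<\max\{R_0^+,P_1\}$) Theorem \ref{teo4} guarantees that \eqref{ruanmod} has no positive endemic equilibrium. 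Since on the edge $I=0$ the dynamics reduces to $\dot S=b(m-S)$ and on the edge $S=0$ one has $\dot I=-I\bigl(p\delta+\gamma+\tfrac{\beta_2}{1+\alpha_2 I}\bigr)<0$ for $I>0$, the disease-free equilibrium $E=(m,0)$ is the only equilibrium of \eqref{ruanmod} in $\Omega$ (the open interior being equilibrium-free). Finally, since $\mathcal{R}_0<1$, the Jacobian $DF(m,0)$ has the two negative eigenvalues $-b$ and $(p\delta+\gamma+\beta_2)(\mathcal{R}_0-1)$, so $E$ is a hyperbolic sink and its unstable manifold is trivial.

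The core of the argument is the exclusion of periodic orbits. Because the line $I=0$ is invariant, any periodic orbit $\gamma$ lies in the open half-plane $\{I>0\}$. A downward vertical ray issued from a point of the bounded region $D$ enclosed by $\gamma$ cannot meet $\gamma$, so $D\subset\{I>0\}$ as well; since $\gamma\subset\Omega$ and $\Omega$ is convex, $D\subset\Omega$, and as the only equilibrium in $\Omega$ lies on $I=0$, the region $D$ contains no equilibrium. This contradicts the Poincar\'e index theorem, by which a closed orbit of a planar $C^1$ flow must encircle at least one equilibrium; hence \eqref{ruanmod} has no periodic orbit in $\Omega$. (In the subcase $\alpha_2\le\alpha$ one may argue instead by Bendixson--Dulac with $B(S,I)=(1+\alpha I)/I$, for which a short computation gives
\[
\frac{\partial (Bf_1)}{\partial S}+\frac{\partial (Bf_2)}{\partial I}
=-\beta-\frac{b(1+\alpha I)}{I}-\alpha(p\delta+\gamma)+\frac{\beta_2(\alpha_2-\alpha)}{(1+\alpha_2 I)^2}<0
\]
on $\{I>0\}$.)

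To conclude, the forward orbit of any solution with positive initial data has compact closure in $\Omega$, so its $\omega$-limit set is nonempty, compact, connected and invariant; with no periodic orbit available, with $E$ the unique equilibrium, and with $E$ a sink (hence joined to itself by no homoclinic orbit), the generalized Poincar\'e--Bendixson theorem leaves $\omega=\{E\}$ as the only possibility. Thus every solution with positive initial data converges to $E$, so $E$ attracts the whole feasible set, and together with the Lyapunov stability supplied by Theorem \ref{teo5} this yields global asymptotic stability. The step I expect to be the main obstacle is precisely the exclusion of periodic orbits: no single Dulac function appears to have sign-definite divergence over the entire parameter range --- the term $\beta_2(\alpha_2-\alpha)/(1+\alpha_2 I)^2$ destroys sign-definiteness when $\alpha_2>\alpha$ --- so the index-theoretic route, which hinges on the invariance of the edge $I=0$, is what makes the argument work uniformly; a secondary point is to check carefully that $E$ is the only equilibrium on all of $\partial\Omega$, not just in the interior.
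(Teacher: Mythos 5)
Your proof is correct and follows essentially the same route as the paper: positive invariance of the feasible triangle, non-existence of endemic equilibria from Theorem \ref{teo4}, and a Poincar\'e--Bendixson argument in which closed orbits are excluded because any such orbit would have to enclose an equilibrium, yet the only equilibrium $E$ lies on the invariant edge $I=0$. Your write-up is somewhat more careful than the paper's (explicitly placing the enclosed region in $\{I>0\}$, checking the boundary edges, and offering a Dulac alternative for $\alpha_2\le\alpha$), but the underlying argument is the same.
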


\begin{proof}
If $0<\mathcal{R}_{0}<1$ then by theorem \ref{teo5} the equilibrium $E$ is
locally asymptotically stable. If any of the given conditions holds then by
theorem \ref{teo4} there are no endemic equilibria in the region
$D=\{S(t),I(t)\geq0\quad\forall t>0,\quad S(t)+I(t)\leq1\}$, which it was
proven to be positively invariant in theorem \ref{teo1}. By \cite{perko} (page
245) any solution of (\ref{ruanmod}) starting in $D$ must approach either an
equilibrium or a closed orbit in $D$. By \cite{kelley} (theorem 3.41) if the
solution path approaches a closed orbit, then this closed orbit must enclose
an equilibrium. Nevertheless, the only equilibrium existing in $D$ is $E$ and
it is located in the boundary of $D$, therefore there is no closed orbit
enclosing it, totally contained in $D$. Hence any solution of system
(\ref{ruanmod}) with initial conditions in $D$ must approach the point $E$ as
$t$ tends to infinity. \begin{figure}[t]
\centering
\includegraphics[scale=0.5]{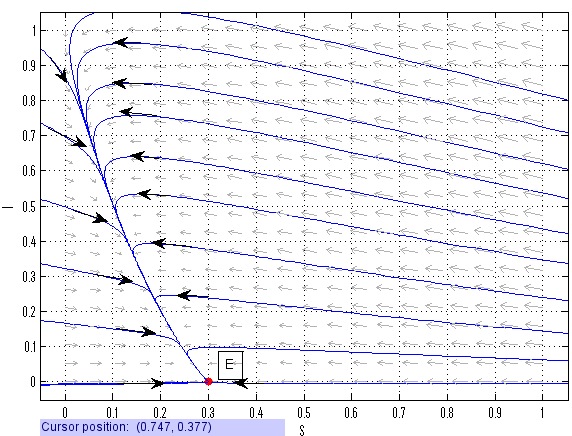}\caption{Global stability of
equilibrium $E$.}%
\label{fig3}%
\end{figure}
\end{proof}

\begin{example}
Take the following values for the parameters: $\alpha=0.4,\alpha_{2}%
=10,\beta=0.2,b=0.2,\gamma=0.01,\delta=0.01,p=0.02,m=0.3,\beta_{2}=0.1$.
Equilibrium $E=(0.3,0)$, $\mathcal{R}_{0}=0.5445<1$. By theorem \ref{teo5}, $E$
is locally asymptotically stable, $ \alpha_2^{0} =7.42<\alpha_{2}$ and $g(\alpha_{2})=-0.1864<\beta_{2}$,
therefore $(\beta_{2},\alpha_{2})\in A_{2}$. By theorem \ref{teo4} there are
no positive endemic equilibria. Finally by theorem \ref{teo6} we have that $E$
is globally stable. See figure \ref{fig3}.
\end{example}

\begin{theorem}
If $\mathcal{R}_{0}=1$ and $\beta_{2}\neq g(\alpha_{2})$ then equilibrium $E$
is a saddle point. Moreover, if $(\beta_{2},\alpha_{2})\in A_{1}\cup A_{2}$
the region $D$ is contained in the stable manifold of $E$. \label{teo7}
\end{theorem}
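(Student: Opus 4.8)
The statement splits into two claims, which I would prove in sequence. For the saddle-point claim, the first observation is that at $\mathcal{R}_{0}=1$ the second root in \eqref{carec1} becomes $\lambda_{2}=\beta m-p\delta-\gamma-\beta_{2}=0$, while $\lambda_{1}=-b<0$; hence $E$ is non-hyperbolic and Theorem \ref{teo5} no longer determines its type, so I would turn to the center manifold theorem. Translating $E$ to the origin via $u=S-m$, $v=I$ gives a linear part $\dot u=-bu+(p\delta-\beta m-bm)v+\cdots$ while the second row of the Jacobian at $E$ is $(0,0)$ when $\mathcal{R}_{0}=1$, so $\dot v=0\cdot v+\cdots$. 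The shear $u\mapsto u-h_{1}v$ with $h_{1}:=(p\delta-\beta m-bm)/b$, the slope of the eigenvector of $\lambda_{2}$, block-diagonalizes the linear part, so there is a one-dimensional center manifold $S=m+h_{1}I+O(I^{2})$. Substituting this into $\dot I=I\big(\tfrac{\beta S}{1+\alpha I}-p\delta-\gamma-\tfrac{\beta_{2}}{1+\alpha_{2}I}\big)$ and expanding $\tfrac{\beta S}{1+\alpha I}$ and $\tfrac{\beta_{2}}{1+\alpha_{2}I}$ to first order in $I$, the bracket — whose constant term is $\beta m-p\delta-\gamma-\beta_{2}=0$ — becomes $K\,I+O(I^{2})$ with $K=\beta h_{1}-\alpha\beta m+\alpha_{2}\beta_{2}$, so the reduced flow on the center manifold is $\dot I=K\,I^{2}+O(I^{3})$.

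The decisive step is the algebraic identity $bK=-B$, obtained by using $\beta m=p\delta+\gamma+\beta_{2}$ (which is $\mathcal{R}_{0}=1$) to reduce $K$ and matching it against the coefficient $B$ in \eqref{abc}; at $\mathcal{R}_{0}=1$ the term $b\alpha(1-\mathcal{R}_{0})(p\delta+\gamma+\beta_{2})$ in $B$ drops out, so $B=\beta(\gamma+\beta_{2}+bm-bm\alpha_{2})+\beta mb\alpha+b\alpha_{2}(p\delta+\gamma)$, and the proof of Theorem \ref{teo4} already records that this quantity vanishes exactly when $\beta_{2}=g(\alpha_{2})$. Thus the hypothesis $\beta_{2}\neq g(\alpha_{2})$ is precisely what forces $B\neq0$, hence $K\neq0$. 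A planar rest point with one negative eigenvalue and center dynamics $\dot I=K I^{2}+O(I^{3})$, $K\neq0$, is a saddle-node: locally its phase portrait has two hyperbolic sectors (where orbits leave every neighborhood) glued to a parabolic sector, which is exactly what is meant by calling $E$ a saddle point. I expect the only real work here to be bookkeeping — carrying out the shear and the Taylor expansion cleanly and verifying $bK=-B$ — since that identity is what pins the non-degeneracy to the dichotomy $\beta_{2}<g(\alpha_{2})$ versus $\beta_{2}>g(\alpha_{2})$.

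For the \emph{moreover} clause I would argue exactly as in the proof of Theorem \ref{teo6}. If $(\beta_{2},\alpha_{2})\in A_{1}\cup A_{2}$ then, since $0<\mathcal{R}_{0}=1\le1$, part 5 of Theorem \ref{teo4} gives no positive endemic equilibrium, so $E$ is the only equilibrium in the compact, positively invariant set $D$ of Lemma \ref{teo1}, and it lies on $\partial D$. By the Poincar\'e--Bendixson theorem (\cite{perko}, p.~245) every orbit starting in $D$ approaches an equilibrium or a closed orbit; a closed orbit in $D$ would have to enclose an equilibrium (\cite{kelley}, Theorem 3.41), which is impossible because the sole equilibrium $E$ sits on the boundary of $D$. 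Hence every solution with initial data in $D$ tends to $E$, that is, $D$ lies in the stable manifold of $E$. This dovetails with the first part: in $A_{1}\cup A_{2}$ one has $B>0$, hence $K<0$, so the branch $I>0$ of the center manifold — the side that meets $D$ — is the attracting parabolic sector of the saddle-node, while its repelling hyperbolic sectors lie in the non-physical half-plane $I<0$.
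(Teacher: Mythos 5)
Your proof is correct and, for the saddle claim, follows essentially the same route as the paper: both reduce the non-hyperbolic equilibrium to a one-dimensional center manifold tangent to the eigenvector of the zero eigenvalue and read off the sign of the quadratic coefficient of the reduced flow. Your version is leaner in two respects. First, you note that the linear approximation of the center manifold already determines the quadratic coefficient $K$ of $\dot I$ (since $\dot I$ carries an overall factor of $I$, the quadratic correction to the manifold only affects cubic terms), whereas the paper computes the manifold to order $v^{3}$; second, your identity $bK=-B$ at $\mathcal{R}_{0}=1$ (which does agree with the paper's coefficient: $H=K$ and $bH=-B$ there) makes the equivalence of the nondegeneracy condition with $\beta_{2}\neq g(\alpha_{2})$ immediate, where the paper computes $H$ explicitly and solves $H=0$ for $\alpha_{2}$. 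You are also more precise about terminology: with one negative eigenvalue and $\dot I=KI^{2}+O(I^{3})$, $K\neq0$, the point is a saddle-node, which is what the paper loosely calls a saddle. Where you genuinely diverge is the \emph{moreover} clause: the paper argues locally on the center manifold ($H<0$ gives $v'<0$ for $v>0$, hence $I\to0^{+}$), which by itself only controls a neighborhood of $E$, while you import the global Poincar\'e--Bendixson argument of Theorem \ref{teo6} (no endemic equilibria in $D$ by part 5 of Theorem \ref{teo4} since $\mathcal{R}_{0}=1$ is covered there, and no closed orbit can enclose the boundary equilibrium $E$), which actually delivers the global statement that all of $D$ lies in the stable manifold. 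For that clause your route is the more complete one.
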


\begin{proof}
If $\mathcal{R}_{0}=1$ one of the eigenvalues of the Jacobian matrix of the
system is zero, hence we cannot apply Hartman-Grobman's theorem. In order to
establish the stability of equilibrium $E$ we apply central manifold theory.
Making the change of variables, $\hat{S}=S-m$, $ \hat{I}=I $,  we obtain the equivalent system%
\begin{align}
\dfrac{d\hat{S}}{dt}  &  =-\dfrac{\beta(\hat{S}+m) \hat{I} }{1+\alpha \hat{I} }-b\hat
{S}-bm \hat{I} +p\delta \hat{I} \nonumber\\
\dfrac{d \hat{I} }{dt}  &  =\dfrac{\beta(\hat{S}+m) \hat{I} }{1+\alpha \hat{I} }-p\delta \hat{I} -\gamma
\hat{I} -\dfrac{\beta_{2} \hat{I} }{1+\alpha_{2} \hat{I} }. \label{carac6}%
\end{align}
Because $ \hat{I}=I $ we ignore the hat and use only $I$. This new system has an equilibrium in $\hat{E}=(0,0)$ and its Jacobian matrix
in that point is%
\begin{equation}
DF(m,0)=\left(
\begin{matrix}
-b & - \beta m - bm + p \delta\\
0 & 0
\end{matrix}
\right)  .\label{eqst1}%
\end{equation}

Using change of variables $S=u-{\frac{\left(  \gamma+\mathit{beta2}+bm\right)  v}{b}},I=v$
and $\beta m=p\delta+\gamma+\beta_{2}$ we obtain the equivalent system (see appendix A):%
\begin{align}
\frac{dv}{dt}  &  =0u+f(v,u)\nonumber\\
\frac{du}{dt}  &  =-bu+g(v,u), \label{eqst2}%
\end{align}
where $f$ and $g$ are defined in Appendix A. \par
By \cite{carr}, system \eqref{ruanmod} has a center manifold of the form
$u=h(v)$ and the flow in the
center manifold ( and therefore in the system ) is given by the equation
\[
v^{\prime}=f(v,h(v))\sim f(v,\phi(v)),
\]
where $h(v) = a_0 v^{2} + a_1 v^{3} + O(v^{4}) $, and $a_i$'s are given in Appendix A. Expanding the Taylor series of $f$ we obtain the flow equation%
\begin{align}
v^{\prime}  &  =-{\frac{{b}^{3}\beta\,m+{b}^{2}{\beta}^{2}m+{b}^{3}%
\gamma\,\mathit{\alpha_{2}}-{b}^{2}\beta\,p\delta+{b}^{3}\alpha\,\beta
\,m+{b}^{3}p\delta\,\mathit{\alpha_{2}}-{b}^{3}\beta\,m\mathit{\alpha_{2}}%
}{{b}^{3}}}v^{2}+O(v^{3})\nonumber\\
&  =Hv^{2}+O(v^{3}).
\end{align}
Therefore the dynamics of solutions near the equilibrium $\hat{E}=(0,0)$
is given by the quadratic term, whenever this term is not zero. We note that
$H=0$ if and only if
\begin{equation}
\alpha_{2}=\frac{-\beta(bm+\beta m-p\delta+b\alpha m)}{b(p\delta+\gamma-\beta
m)}.
\end{equation}
Substituting again $\mathcal{R}_{0}=1$, expressed as $\beta m=p\delta
+\gamma+\beta_{2}$, we obtain $H=0$ if and only if $\beta_{2}=g(\alpha_{2})$. \par

If $(\beta_{2},\alpha_{2})\in A_{3}$ then $H>0$. $v^{\prime}>0$ for $v\neq0$.
If $(\beta_{2},\alpha_{2})\in A_{1}\cup A_{2}$ then $H<0$, $v^{\prime}<0$ for
$v\neq0$. In both cases $\hat{E}$ is a saddle point. Moreover, if $(\beta
_{2},\alpha_{2})\in A_{1}\cup A_{2}$ then $H<0$ and $v^{\prime}<0$ for $v>0$.
Recalling $v(t)=I(t)$ we have under this assumption that $I^{\prime}(t)<0$ for
$I>0$ therefore $I(t)\rightarrow0^{+}$, while as $v_{1}=(1,0)$ is the stable
direction of the point $E$ then $S(t)\rightarrow0$, therefore the solutions in
the region $D$ approach the equilibrium $E$ as $t\rightarrow\infty$.
\begin{figure}[t]
\centering
\includegraphics[scale=0.5]{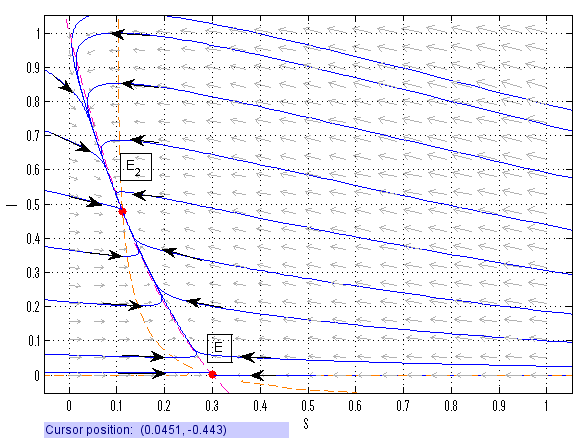}\caption{Phase plane of the system
for $\mathcal{R}_{0}=1$ with $(\beta_{2},\alpha_{2})\in A_{3}$}%
\label{fig9}%
\end{figure}
\end{proof}

\begin{example}
Take the following values for the parameters: $\beta=0.2,\alpha=0.4,\delta
=0.01,\gamma=0.01,\alpha_{2}=10,m=0.3,p=0.02,b=0.2, \beta_2 = 0.0498$. In this case
$\mathcal{R}_{0}=1$, $ \alpha_2^{0} =1.8876$
and  $g(\alpha_{2})=0.4040$, hence $(\beta_{2},\alpha_{2})\in A_{3}$.
By the first case of theorem \ref{teo4} the system has a unique endemic
equilibrium in $S_2=0.11210,I_2=0.4781$. By theorem \ref{teo7} the
equilibrium $E$ is a saddle point, see figure \ref{fig9}.
\end{example}

\begin{figure}[t]
\centering
\includegraphics[scale=0.5]{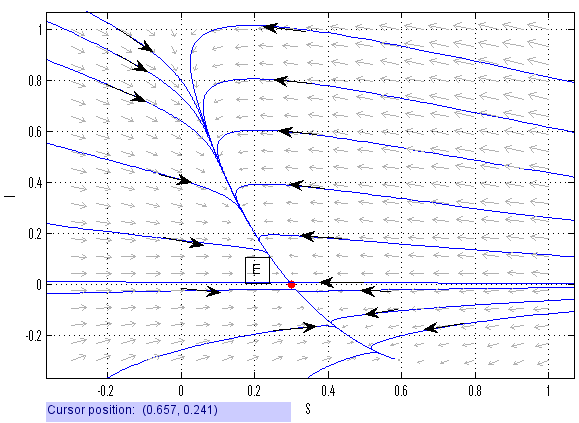}\caption{Phase plane for
$\mathcal{R}_{0}=1$ with $(\beta_{2},\alpha_{2})\in A_{2}$ }%
\label{fig10}%
\end{figure}

\begin{example}
If we take the same values as in the previous example except $\alpha
_{2}=2$, then
$g(\alpha_{2})=0.0056<\beta_{2}$, hence $(\beta_{2},\alpha_{2})\in A_{2}$. By
theorem \ref{teo4} the system has no endemic equilibria, and by theorem
\ref{teo7} the point $E$ is a saddle point. Moreover, the region $D$ is
totally contained in the stable manifold, see figure \ref{fig10}.
\end{example}

\subsection{Stability of endemic equilibria}

The general form of the Jacobian matrix is
\begin{equation}
DF=\left(
\begin{matrix}
-\dfrac{\beta I}{1+\alpha I}-b &  & -\dfrac{\beta S}{(1+\alpha I)^{2}%
}-bm+p\delta\\
\dfrac{\beta I}{1+\alpha I} &  & \dfrac{\beta S}{(1+\alpha I)^{2}}%
-p\delta-\gamma-\dfrac{\beta_{2}}{(1+\alpha_{2}I)^{2}}%
\end{matrix}
\right)  .
\end{equation}
Therefore the characteristic equation for an endemic equilibrium is
\begin{align}
P(\lambda)  &  =\left(  -\dfrac{\beta I}{1+\alpha I}-b-\lambda\right)  \left(
\dfrac{\beta S}{(1+\alpha I)^{2}}-p\delta-\gamma-\dfrac{\beta_{2}}%
{(1+\alpha_{2}I)^{2}}-\lambda\right) \nonumber\\
-  &  \left(  \dfrac{\beta I}{1+\alpha I}\right)  \left(  -\dfrac{\beta
S}{(1+\alpha I)^{2}}-bm+p\delta\right).
\end{align}
If we denote by
\begin{align}
C_{I}  &  :=\frac{\beta I}{1+\alpha I}\\
C_{S}  &  :=\frac{\beta S}{(1+\alpha I)^{2}}\\
D_{I}  &  :=\frac{\beta_{2}}{(1+\alpha_{2}I)^{2}}.%
\end{align}
Then the characteristic polynomial is rewritten as
\begin{align}
P(\lambda)  &  =\lambda^{2}+W\lambda+U \label{carec2}.%
\end{align}
Where:
\begin{align}
W &= C_I+b-C_S + p \delta + \gamma + D_I \\
U &= C_I \gamma + C_I D_I - b C_S + b p \delta + b \gamma + b D_I + C_I b m .
\end{align}

By proposition Routh Hurwitz criteria for $n=2$ if the coefficient  $W$ and the independent
term $U$ are positive then the roots of the characteristic equation have negative
real part and therefore the endemic equilibrium is locally asymptotically stable.
Note that whenever the equilibriums are positive, $C_{I},C_{S},D_{I}$ will be
positive as well. Let us analyze the stability according to the value of
$\mathcal{R}_{0}$.

\begin{theorem}
Whenever the equilibrium $E_1$ exists it is a saddle and therefore unstable.

\label{teo8}
\end{theorem}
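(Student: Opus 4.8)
The plan is to prove that $\det DF(E_1)<0$: in the notation of \eqref{carec2} the independent term $U=\det DF(E_1)$ is the product of the two eigenvalues, so $U<0$ forces them to be real with opposite signs, and $E_1$ is a saddle point, hence unstable. Throughout, ``$E_1$ exists'' is understood as in case~2 of Theorem~\ref{teo4}, i.e.\ $\mathcal{R}_0<1$, $(\beta_2,\alpha_2)\in A_3$ and $\max\{P_1,R_0^{+}\}<\mathcal{R}_0<1$, so that the quadratic $Q(I):=AI^{2}+BI+C$ of \eqref{eqec7} has two distinct positive roots $I_1<I_2$.

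First I would rewrite the Jacobian in a form adapted to the nullclines. Writing $G(S,I):=f_1(S,I)$ and $H(S,I):=\frac{\beta S}{1+\alpha I}-p\delta-\gamma-\frac{\beta_2}{1+\alpha_2 I}$, we have $f_2=I\,H$, and since $H(E_i)=0$ at any endemic equilibrium,
\[
DF(E_i)=\begin{pmatrix} G_S & G_I\\ I_i\,H_S & I_i\,H_I\end{pmatrix},\qquad \det DF(E_i)=I_i\bigl(G_S H_I-G_I H_S\bigr),
\]
all partials evaluated at $E_i$; note $H_S=\beta/(1+\alpha I_i)>0$ and $I_i>0$. Next, parametrize the branch $\{H=0\}$ by $S=s_H(I):=\frac{1+\alpha I}{\beta}\bigl(p\delta+\gamma+\frac{\beta_2}{1+\alpha_2 I}\bigr)$ (this is \eqref{eqec5}), and set $q(I):=G\bigl(s_H(I),I\bigr)$, so $q(I_i)=0$ for each endemic equilibrium. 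Differentiating the identity $H(s_H(I),I)\equiv 0$ gives $s_H'=-H_I/H_S$, whence $q'(I_i)=G_S\,s_H'(I_i)+G_I=-\bigl(G_S H_I-G_I H_S\bigr)/H_S$, that is, $\det DF(E_i)=-I_i H_S\,q'(I_i)$.

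The key computation is then to identify $q$ with the quadratic. Using the algebraic identity $G(S,I)=\dfrac{\beta I+b(1+\alpha I)}{1+\alpha I}\bigl(s_G(I)-S\bigr)$, where $s_G(I)$ is the branch $\{G=0\}$ of \eqref{eqec3}, and substituting $S=s_H(I)$, the common denominators cancel and, after clearing the factor $1+\alpha_2 I$, one obtains
\[
q(I)=\frac{-\,(AI^{2}+BI+C)}{\beta\,(1+\alpha_2 I)},
\]
because the polynomial produced by the substitution is quadratic with leading coefficient $-A<0$ and has roots exactly $I_1,I_2$, hence equals $-A(I-I_1)(I-I_2)=-Q(I)$. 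Since $Q(I_i)=0$, this gives $q'(I_i)=-Q'(I_i)/\bigl(\beta(1+\alpha_2 I_i)\bigr)$, and combining with the previous step,
\[
\det DF(E_i)=\frac{I_i}{(1+\alpha I_i)(1+\alpha_2 I_i)}\;Q'(I_i).
\]
Finally, $Q(I)=A(I-I_1)(I-I_2)$ with $A>0$ and $I_1<I_2$ yields $Q'(I_1)=A(I_1-I_2)<0$; as the prefactor is strictly positive, $U=\det DF(E_1)<0$. Hence $P(\lambda)=\lambda^{2}+W\lambda+U$ has discriminant $W^{2}-4U>0$ and negative product of roots, so both eigenvalues are real of opposite sign and $E_1$ is a saddle, therefore unstable.

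The main obstacle is the bookkeeping hidden in the displayed identity for $q(I)$: one must carry out the substitution $S=s_H(I)$ in $f_1$, clear the factor $1+\alpha_2 I$, and check that the resulting quadratic has leading coefficient $-A$ with the $A,B,C$ of \eqref{abc} — the coincidence of its roots with $I_1,I_2$ then pins the polynomial down to the scalar $-1$ by a degree count. Everything else (the Jacobian rewrite, the implicit-function computation of $q'$, and the eigenvalue conclusion) is formal. As an alternative to the explicit identification, one may argue geometrically: since $Q$ opens upward and vanishes at $I_1<I_2$, the signed difference $s_G(I)-s_H(I)$ of the two nullcline branches changes sign from negative to positive at $I_1$, so $q'(I_1)>0$, and then $\det DF(E_1)=-I_1 H_S\,q'(I_1)<0$ exactly as above.
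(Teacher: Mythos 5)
Your proposal is correct, and it reaches the paper's conclusion ($U=\det DF(E_1)<0$, hence a saddle) by a genuinely different route. The paper computes $U$ brute-force as $\frac{I_1F(I_1)}{(1+\alpha I_1)(1+\alpha_2I_1)^2}$ with $F(I)=\alpha_2AI^2+2AI+(B-\alpha_2C)$, locates the unique positive root $I\ast$ of $F$, and proves $I_1<I\ast$ through an explicit radical inequality using $\Delta=B^2-4AC>0$; you instead exploit the factorization $f_2=IH$ and implicit differentiation along the nullcline $H=0$ to obtain the structural identity $\det DF(E_i)=\frac{I_i\,Q'(I_i)}{(1+\alpha I_i)(1+\alpha_2 I_i)}$, after which the sign at the smaller root of the upward-opening parabola $Q$ is immediate. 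I checked the key algebraic step: substituting $S=s_H(I)$ into $f_1$ and clearing denominators does produce $-(AI^2+BI+C)/(\beta(1+\alpha_2 I))$ (constant term $-C$ and leading coefficient $-A$ both check out), and the implicit-function bookkeeping $\det DF(E_i)=-I_iH_S\,q'(I_i)$ is right. Your approach buys two things: it eliminates the radical comparison entirely, and it yields for free that $\det DF(E_2)=I_2Q'(I_2)/(\cdots)>0$ whenever $I_1<I_2$, which would actually close the case $I_2<I\ast$ that the paper's Theorem \ref{teo9} leaves open. The paper's computation, in exchange, produces the explicit threshold $I\ast$ that it reuses there. One caveat on scope: you restrict to distinct roots $I_1<I_2$ (case 2 of Theorem \ref{teo4}); in the degenerate case $I_1=I_2$ one gets $Q'(I_1)=0$, hence $U=0$, and the saddle conclusion fails --- but the paper's own proof silently assumes $\Delta>0$ as well, so this is not a gap relative to what is actually being claimed.
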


\begin{proof}
Consider $E_{1}=(S_{1},I_{1})$ and its characteristic polynomial
(\ref{carec2}). By Routh-Hurwitz criterion for quadratic polynomials, its
roots have negative real part if and only if $U>0$ and $W>0$ , where $U,W$
depend on $E_{1}$. Moreover, when $U<0$ its roots are both real with different sign and when $U>0$ and $W<0$ the roots have positive real part. Computing the value of $U$ and expressing $S_{1}$ in terms
of $I_{1}$ we obtain
\begin{equation}
U=\dfrac{I_1(a_{1}I_{1}^{2}+b_{1}I_{1}+c_{1})}{(1+\alpha I_{1})(1+\alpha
_{2}I_{1})^{2}}=\dfrac{I_1 F(I_{1})}{(1+\alpha I_{1})(1+\alpha_{2}I_{1})^{2}}.
\end{equation}
Where:
\begin{align}
a_{1} &  =\alpha_{2}^{2}(\beta\gamma+bp\alpha\delta+b\alpha\gamma
+bm\beta) = \alpha_ 2 A >0, \nonumber\\
b_{1} &  =2\alpha_{2}(\beta\gamma+bp\alpha\delta+b\alpha\gamma+bm\beta
) = 2A>0,\nonumber\\
c_{1} &  =\beta\beta_{2}+bm\beta+bp\alpha\delta+b\alpha\beta_{2}+\beta
\gamma-b\alpha_{2}\beta_{2}+b\alpha\gamma = B- \alpha_2 C.
\end{align}
We are assuming that equilibrium $E_1$ exists and it is positive, and these happens (by previous section) when $B<0$ and $C>0$, so $c_1<0$. The sign of $U$ is equal to $\sgn(F(I_{1}))$. $F(I_{1})$ has two roots of the
form:
\begin{align}
I\ast  &=\dfrac{-b_{1}+\sqrt{b_{1}^{2}-4a_{1}c_{1}}}{2a_{1}} \\
I\ast\ast &=\dfrac{-b_{1}-\sqrt{b_{1}^{2}-4a_{1}c_{1}}}{2a_{1}}.
\end{align}
Where $b_1^{2}-4a_1 c_1>0$ and therefore $I \ast $ and $I \ast \ast$ are both real values with
$I\ast\ast<0$.  $F(I_{1})>0$ for $I_{1}>I\ast$ and $I_{1}<I\ast\ast$, but second condition never holds because $I_{1}>0$, so $F(I_1)<0$ for $0 < I_1 < I\ast  $. \par

Computing $I*$ in terms of $A,B,C$:
\begin{align}
I \ast &= - \dfrac{1}{\alpha_2} + \dfrac{1}{\alpha_2 A} \sqrt{( A^{2}- \alpha_2 AB + \alpha_2^{2} AC )} .
\end{align}
Substituting $\Delta = B^{^2}-4AC>0$

\begin{align}
I \ast &= - \dfrac{1}{\alpha_2} + \dfrac{1}{\alpha_2 A} \sqrt{\left( A^{2}- \alpha_2 AB + \frac{\alpha_2 ^{2}}{4} (B^{2}- \Delta ) \right)} \nonumber \\
&= - \dfrac{1}{\alpha_2} + \dfrac{1}{2 \alpha_2 A} \sqrt{(2A- \alpha_2 B )^{2}- \alpha_2^{2} \Delta } \nonumber \\
&> - \dfrac{1}{\alpha_2} + \dfrac{1}{2 \alpha_2 A} \left(  \sqrt{(2A- \alpha_2 B )^{2}} - \sqrt{\alpha_2^{2} \Delta}  \right) \nonumber \\
&= \dfrac{-B- \sqrt{\Delta} }{2A} = I_1.
\end{align}
Therefore $U<0$ and the equilibrium $E_1$ is a saddle.
\end{proof}

\begin{theorem}
Assume the conditions of theorem \ref{teo4} for existence and positivity  of the endemic equilibrium $E_2$.
 If $I_2< I*$ the equilibrium $E_2$ is unstable, else if $I_2> I*$  then $E_2$ is locally asymptotically stable for $s>0$ and  unstable for $s<0$. \par
Where $s=m_{1}(-B+\sqrt{B^{2}-4AC})+2Am_{2}$,
\begin{align}
m_{1} &  =(r+\beta_{2}\alpha-\beta_{2}\alpha_{2}+2B\alpha_{2})A^{2}-\alpha
_{2}^{2}rAC-AB\alpha_{2}(b\alpha_{2}+2r+B^{2}\alpha_{2}^{2}r),\nonumber\\
m_{2} &  =bA^{2}-AC\alpha_{2}(b\alpha_{2}+2r)+\alpha_{2}^{2}rBC,\nonumber\\
r &  =\alpha(p\delta+b+\gamma)+\beta.
\end{align}
\label{teo9}
\end{theorem}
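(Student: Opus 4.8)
The plan is to exploit the Routh--Hurwitz description of stability for the quadratic characteristic polynomial $P(\lambda)=\lambda^{2}+W\lambda+U$ from (\ref{carec2}): the equilibrium $E_{2}$ is locally asymptotically stable precisely when $W>0$ and $U>0$; it is a saddle when $U<0$; and it has a pair of eigenvalues with positive real part when $U>0$ and $W<0$. So the proof splits according to the sign of $U$, and then of $W$, and everything reduces to algebra in the single variable $I_{2}$ once $S_{2}$ has been eliminated.

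First I would dispose of the case $I_{2}<I^{*}$. Reusing the computation in the proof of Theorem~\ref{teo8}, at an endemic equilibrium $U=\dfrac{I_{2}F(I_{2})}{(1+\alpha I_{2})(1+\alpha_{2}I_{2})^{2}}$, where $F$ is the quadratic with positive leading coefficient, positive root $I^{*}$ and negative root $I^{**}$; hence $\sgn U=\sgn F(I_{2})$, and since $I_{2}>0>I^{**}$ we obtain $U<0\iff I_{2}<I^{*}$. When $U<0$ the two roots of $P$ are real of opposite sign, so $E_{2}$ is a saddle, hence unstable. This settles the first assertion.

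For $I_{2}>I^{*}$ we have $U>0$, so by Routh--Hurwitz the stability of $E_{2}$ is governed entirely by the sign of $W=C_{I}+b-C_{S}+p\delta+\gamma+D_{I}$. The next step is to write $W$ explicitly as a rational function of $I_{2}$: substituting $S_{2}=\dfrac{1+\alpha I_{2}}{\beta}\!\left(p\delta+\gamma+\dfrac{\beta_{2}}{1+\alpha_{2}I_{2}}\right)$ from (\ref{eqec5}) gives $C_{S}=\dfrac{1}{1+\alpha I_{2}}\!\left(p\delta+\gamma+\dfrac{\beta_{2}}{1+\alpha_{2}I_{2}}\right)$, so that $\widetilde{W}:=(1+\alpha I_{2})(1+\alpha_{2}I_{2})^{2}\,W$ is a polynomial in $I_{2}$ with coefficients built from the parameters; since the factor $(1+\alpha I_{2})(1+\alpha_{2}I_{2})^{2}$ is positive, $\sgn W=\sgn\widetilde W$. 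Now I would reduce $\widetilde W$ modulo the defining relation $AI_{2}^{2}+BI_{2}+C=0$ of (\ref{eqec7}), i.e. repeatedly replace $AI_{2}^{2}$ by $-(BI_{2}+C)$; this collapses $\widetilde W$ to degree one in $I_{2}$, and after clearing the (positive) powers of $A$ introduced in the process, $\widetilde W$ acquires the same sign as a linear expression $m_{1}I_{2}+m_{2}$, with $m_{1},m_{2}$ the stated polynomials and $r=\alpha(p\delta+b+\gamma)+\beta$. Finally, since $2AI_{2}=-B+\sqrt{B^{2}-4AC}$ by (\ref{I0}), multiplying by the positive number $2A$ yields $2A(m_{1}I_{2}+m_{2})=m_{1}(-B+\sqrt{B^{2}-4AC})+2Am_{2}=s$, whence $\sgn W=\sgn s$. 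Therefore, when $I_{2}>I^{*}$, $E_{2}$ is locally asymptotically stable for $s>0$ and unstable (a repelling focus or node, as $U>0$ and $W<0$ there) for $s<0$, which is the second assertion.

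The principal difficulty is purely computational: performing the polynomial reduction of $\widetilde W$ modulo $AI_{2}^{2}+BI_{2}+C$ and checking that the resulting linear coefficients coincide with the displayed $m_{1}$ and $m_{2}$ (including careful bookkeeping of which powers of $A$ get cleared). The conceptual content is short --- stability is equivalent to $W>0$ once $U>0$, and the sign of $U$ is controlled by comparing $I_{2}$ with the threshold $I^{*}$ already produced in Theorem~\ref{teo8}. One should also observe that the borderline $s=0$, equivalently $W=0$ with $U>0$, is exactly the purely-imaginary-eigenvalue locus studied as a Hopf bifurcation in Section~4, so its exclusion here is legitimate.
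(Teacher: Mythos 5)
Your proposal is correct and follows essentially the same route as the paper: the sign of $U$ is read off from $F(I_2)$ exactly as in Theorem \ref{teo8}, and the sign of $W$ is obtained by clearing the positive denominator, reducing the resulting cubic $G(I_2)$ modulo $AI_2^2+BI_2+C=0$ (the paper phrases this as the division algorithm) to the linear remainder $\frac{m_1I_2+m_2}{A^2}$, and then substituting $2AI_2=-B+\sqrt{B^2-4AC}$ to identify its sign with that of $s$. No substantive difference from the paper's argument.
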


\begin{proof}
Consider $E_{2}=(S_{2},I_{2})$ be real and positive,  and its characteristic polynomial (\ref{carec2}). We will have that the equilibrium is unstable when $U<0$ and  locally asymptotically stable when $U>0, W>0$.
 Following the previous proof
\begin{equation}
U=\dfrac{I_2 (a_{1}I_{2}^{2}+b_{1}I_{2}+c_{1})}{(1+\alpha I_{2})(1+\alpha
_{2}I_{2})^{2}}=\dfrac{I_2 F(I_{2})}{(1+\alpha I_{2})(1+\alpha_{2}I_{2})^{2}}.
\end{equation}
Where  $a_1,b_1,c_1$ are the same as in previous theorem.  Therefore $\sgn(U)=\sgn(F(I_{2}))$. We have seen that $F(I_{2})$ has two real roots $I \ast$ and $ I \ast \ast$. Again $F(I_{2})>0$ for $I_{2}>I\ast$ and $I_{2}<I\ast\ast$ (which does not holds because $I \ast \ast<0$), and $F(I_2)<0$ for $0 < I_2 < I\ast  $ . So if $I_2< I*$ the equilibrium $E_2$ is unstable.  \par
When $I_2> I*$ then $U>0$ and

\begin{align}
&  W=\dfrac{1}{(1+\alpha I_{2})(1+\alpha_{2}I_{2})^{2}}[{\alpha_{{2}}}%
^{2}\left(  \alpha\,\gamma+b\alpha+\beta+\alpha\,p\delta\right)  {I_{{2}}}%
^{3}\nonumber\\
&  +\alpha_{{2}}\left(  b\alpha_{{2}}+2\,\alpha\,p\delta+2\,b\alpha
+2\,\alpha\,\gamma+2\,\beta\right)  {I_{{2}}}^{2}\nonumber\\
&  +\left(  \alpha\,p\delta+b\alpha+\beta+\alpha\,\mathit{\beta_{2}%
}-\mathit{\beta_{2}}\,\alpha_{{2}}+\alpha\,\gamma+2\,b\alpha_{{2}}\right)
I_{{2}}+b]\nonumber\\
&  =\dfrac{G(I_{2})}{(1+\alpha I_{2})(1+\alpha_{2}I_{2})^{2}}.
\end{align}
By using the division algorithm,
\begin{align}
G(I_{2}) &  =(AI_{2}^{2}+BI_{2}+C)P(I_{2})\nonumber\\
&  +\frac{1}{A^{2}}[(r+\beta_{2}\alpha-\beta_{2}\alpha_{2}+2B\alpha_{2}%
)A^{2}-\alpha_{2}^{2}rAC-AB\alpha_{2}(b\alpha_{2}+2r+B^{2}\alpha_{2}%
^{2}r)I_{2}\nonumber\\
&  +bA^{2}-AC\alpha_{2}(b\alpha_{2}+2r)+\alpha_{2}^{2}rBC],\nonumber\\
&  =(AI_{2}^{2}+BI_{2}+C)P(I_{2})+\dfrac{m_{1}I_{2}+m_{2}}{A^{2}}.
\end{align}
Where $P(I_{2})$ is a polynomial in $I_{2}$ of degree one. Being $I_{2}$ a
coordinate of an equilibrium then $AI_{2}^{2}+BI_{2}+C=0$ and
\[
G(I_{2})=\dfrac{m_{1}I_{2}+m_{2}}{A^{2}}.
\]
Hence $\sgn(W)=\sgn(G(I_{2}))=\sgn(\frac{m_{1}I_{2}+m_{2}}{A^{2}})=\sgn(m_{1}%
I_{2}+m_{2}).$ Substituting the value of $I_{2},$
\[
m_{1}I_{2}+m_{2}=\frac{m_{1}}{2A}(-B+\sqrt{B^{2}-4AC})+m_{2}.
\]
It follows that $\sgn(m_{1}I_{2}+m_{2})=\sgn(m_{1}(-B+\sqrt{B^{2}-4AC}%
)+2Am_{2})=\sgn(s).$ Therefore $E_2$ is unstable if $s<0$ and locally asymptotically stable if $s>0$.

\end{proof}

\section{Hopf bifurcation}

By previous section we know that the system \eqref{ruanmod} has two positive endemic equilibria under the conditions of theorem  \eqref{teo4} . Equilibrium $E_1$ is always a saddle, so its stability does not change and there is no possibility of a Hopf bifurcation in it. So let us analyse the existence of a Hopf bifurcation of equilibrium $E_2=(S_2,I_2)$. Analysing the characteristic equation for $E_2$,  it has a pair of pure imaginary roots if and only if $U>0$ and $W=0$ . \par

\begin{theorem}
System \eqref{ruanmod} undergoes a Hopf bifurcation of the endemic equilibrium $E_2$ (whenever it exists) if $I_2>I^{*}$ and $s= 0$. Moreover, if $ \bar{a}_{2}<0 $, there is a family of stable periodic orbits of
\eqref{ruanmod} as $s$ decreases from 0; if $ \bar{a}_2>0 $, there is a family of unstable periodic orbits of \eqref{ruanmod} as $s$ increases from 0. \label{biftheo1}
\end{theorem}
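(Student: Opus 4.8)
The plan is to verify the three standard ingredients of the planar Hopf bifurcation theorem (as in Perko, \S4.4, or Guckenheimer--Holmes, Theorem 3.4.2) for the equilibrium $E_2$: a simple pair of purely imaginary eigenvalues at the critical value $s=0$, transversal crossing of the imaginary axis, and non-vanishing of the first Lyapunov coefficient, whose sign is exactly what the quantity $\bar{a}_2$ records.

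First I would read off the eigenvalue configuration from the characteristic polynomial \eqref{carec2}, $P(\lambda)=\lambda^{2}+W\lambda+U$, evaluated at $E_2$. By Theorem \ref{teo9}, the hypothesis $I_2>I^{*}$ gives $U>0$, and the condition $s=0$ forces $W=0$ (recall $\sgn W=\sgn s$); hence $P(\lambda)=\lambda^{2}+U$ has the simple conjugate pair $\lambda=\pm i\omega_0$ with $\omega_0=\sqrt{U}>0$, and $U>0$ persists on a neighborhood of $s=0$ by continuity. For the transversality I would treat $s$ as controlled by a single model parameter $\mu$ (all others fixed) chosen so that $s(\mu_0)=0$ with $s'(\mu_0)\neq0$; since the eigenvalues near $s=0$ are $\lambda=\tfrac12(-W\pm\sqrt{W^{2}-4U})$ with $W^{2}-4U<0$, their real part equals $-W/2$, so $\tfrac{d}{ds}\operatorname{Re}\lambda\big|_{s=0}=-\tfrac12 W'(0)$. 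Theorem \ref{teo9} shows $W$ (equivalently $m_1 I_2+m_2$) changes sign from negative to positive as $s$ passes through $0$, $E_2$ being unstable for $s<0$ and locally asymptotically stable for $s>0$; hence $W'(0)>0$ and the crossing is transversal with $\tfrac{d}{ds}\operatorname{Re}\lambda\big|_{s=0}<0$.

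Next comes the direction and stability. I would translate $E_2$ to the origin via $x=S-S_2$, $y=I-I_2$, Taylor-expand the two right-hand sides of \eqref{ruanmod} through cubic order, and, at $s=0$, apply the linear change of coordinates built from the real and imaginary parts of an eigenvector of the Jacobian associated with $i\omega_0$, bringing the linear part to $\left(\begin{smallmatrix}0&-\omega_0\\\omega_0&0\end{smallmatrix}\right)$; in the new coordinates the system reads $\dot u=-\omega_0 v+F(u,v)$, $\dot v=\omega_0 u+G(u,v)$ with $F,G=O(|(u,v)|^{2})$. Then I would evaluate the first Lyapunov coefficient
$$16a=F_{uuu}+F_{uvv}+G_{uuv}+G_{vvv}+\frac{1}{\omega_0}\bigl(F_{uv}(F_{uu}+F_{vv})-G_{uv}(G_{uu}+G_{vv})-F_{uu}G_{uu}+F_{vv}G_{vv}\bigr)$$
at the origin, and identify $\bar{a}_2$ with this number (or a positive multiple of it), relegating the explicit expression in the original parameters to the appendix. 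Finally I would invoke the Hopf bifurcation theorem: non-degeneracy $\bar{a}_2\neq0$ together with the transversal crossing yields a unique one-parameter family of limit cycles bifurcating from $E_2$ at $s=0$. Because $\tfrac{d}{ds}\operatorname{Re}\lambda<0$ ($E_2$ is a stable focus for $s>0$ and an unstable focus for $s<0$ near the critical value), $\bar{a}_2<0$ (supercritical) produces stable periodic orbits on the side where the focus is unstable, i.e. as $s$ decreases from $0$, while $\bar{a}_2>0$ (subcritical) produces unstable periodic orbits on the side where the focus is stable, i.e. as $s$ increases from $0$, exactly as claimed.

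The main obstacle is purely computational: reducing the vector field to the required normal form and expressing the first Lyapunov coefficient in closed form in terms of $\alpha,\beta,\alpha_2,\beta_2,b,m,p,\delta,\gamma$ is very lengthy (hence the appendix). The only conceptual care needed is bookkeeping of the orientation conventions, so that the sign of $\bar{a}_2$, the sign of the transversality derivative (pinned down by Theorem \ref{teo9}), and the stated direction in $s$ remain mutually consistent.
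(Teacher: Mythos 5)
Your proposal is correct and follows essentially the same route as the paper: pure imaginary eigenvalues from $U>0$, $W=0$ with $\sgn W=\sgn s$, transversality from $\operatorname{Re}\lambda=-W/2$ being a nonzero multiple of $s$, and then the translation of $E_2$ to the origin, reduction of the linear part to rotation form, and evaluation of the first Lyapunov coefficient $\bar a_2$ via the standard Guckenheimer--Holmes formula. Your sign bookkeeping ($\tfrac{d}{ds}\operatorname{Re}\lambda|_{s=0}=-\tfrac12 W'(0)<0$) is in fact slightly more careful than the paper's, which writes the derivative as $+\tfrac12 W'(s)$ but only uses its non-vanishing.
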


The characteristical polinomial for $E_2$ has a pair of pure imaginary roots iff $U>0$ and $W=0$. From the proof of theorem \ref{teo8} we have that $ U>0$ if and only if one of the conditions (i),(ii) is satisfied .

Although, sgn$(W)=$sgn$(s)$, so $W=0$ if and only if $s=0$. By first part of theorem 3.4.2 of \cite{guckenheimer} the roots $ \lambda $ and $ \bar{ \lambda } $ of \eqref{carec2} for $E_2$ vary smoothly, so we can affirm that near $ s=0 $ these roots are still complex conjugate and

\begin{align}
\frac{d  Re( \lambda ( s))}{d s} \mid_{s = 0} &= \frac{d}{d s } \left( \frac{1}{2} W(s) \right)  \nonumber \\
&= \frac{1}{2} \frac{d}{d s } \left( \frac{1}{2 A ^{3} (1+ \alpha_1 I_1)(	1+ \alpha_2 I_1)^{2} } s  \right) \nonumber \\
&=  \frac{1}{ 4 A^{3} (1+ \alpha_1 I_1)(	1+ \alpha_2 I_1)^{2} } \neq 0.
\end{align}

Therefore $s=0$ is the Hopf bifurcation point for \eqref{ruanmod} .\par

To analyze the behaviour of the solutions of \eqref{ruanmod} when $  s=0 $ we make a change of coordinates  to obtain a new equivalent system to \eqref{ruanmod} with an equilibrium in $(0,0)$ in the $x-y$ plane ( see appendix B ). Under this change the system becomes:

\begin{align}
\frac{dx}{dt} &= {\frac {a_{{11}}x+a_{{12}}y+c_{{1}}xy+c_{{2}}{y}^{2} }{1+\alpha_{{1}}y+
\alpha_{{1}}I_{{2}}}}, \nonumber \\
\frac{dy}{dt} &= {\frac {a_{{21}}x+a_{{22}}y+c_{{3}}xy+c_{{4}}x{y}^{2}+c_{{5}}{y}^{2}+c
_{{6}}{y}^{3} }{ \left( 1+\alpha_{{1}}y+\alpha_{{1}}I_{{2}} \right)
 \left( 1+\alpha_{{2}}y+\alpha_{{2}}I_{{2}} \right) }}. \label{hmrhb1}
\end{align}

Where the $a_{ij}$'s and $c_i$'s are defined in appendix B.

System \eqref{hmrhb1} and \eqref{ruanmod} are equivalent ( appendix B ), so we can work with \eqref{hmrhb1}. This system has a pair of pure imaginary eigenvalues if and only if \eqref{ruanmod} has them too. As we said before it happens if and only if any of conditions (i),(ii) is satisfied and $s=0$. Computing jacobian matrix $DF(0,0)$ of \eqref{hmrhb1}
\begin{equation}
DF(0,0)=  \left[ \begin {array}{cc} {\dfrac {a_{{11}}}{1+\alpha_{{1}}I_{{2}}}}&
{\dfrac {a_{{12}}}{ \left( 1+\alpha_{{1}}I
_{{2}} \right) }}\\ \noalign{\medskip}{\dfrac {a_{{21}}}{ \left( 1+
\alpha_{{2}}I_{{2}} \right)  \left( 1+\alpha_{{1}}I_{{2}} \right) }}&{
\dfrac {a_{{22}}}{ \left( 1+\alpha_{{2}}I_{{2}} \right)  \left( 1+
\alpha_{{1}}I_{{2}} \right) }}\end {array} \right].
\end{equation}

 $$ Tr (DF(0,0)) = Tr(Df(S_2,I_2)) , \quad  \det (DF(0,0)) = \det (Df(S_2,I_2)).$$
 So condition $s=0$ is equivalent to $ a_{11}(1+\alpha_2 I_2)+ a_{22}=0 $ and (i),(ii) are equivalent to $ a_{22} a_{11}- a_{12} a_{21} > 0 $.  \par
  System \eqref{hmrhb1} can be rewritten as

\begin{align}
\dfrac{dx}{dt} &= {\dfrac {a_{{11}}x}{1+\alpha_{{1}}I_{{2}}}}+{\frac {a_{{12}}y}{1+\alpha
_{{1}}I_{{2}}}} + G_1 (x,y) \\
\dfrac{dy}{dt} &= {\frac {a_{{21}}x}{ \left( 1+\alpha_{{1}}I_{{2}} \right)  \left( 1+
\alpha_{{2}}I_{{2}} \right) }}+{\frac {a_{{22}}y}{ \left( 1+\alpha_{{1
}}I_{{2}} \right)  \left( 1+\alpha_{{2}}I_{{2}} \right) }} + G_2(x,y).
\end{align}

Where $G_1,G_2$ are defined in appendix B.

 Let $ \Lambda = \sqrt{ \det (DF(0,0)) } $.  We use the change of variable $u=x, v= \frac{a_{11}}{ \Lambda( 1+ \alpha_1  I_2)} + \frac{a_{12} y}{  \Lambda(1 + \alpha_{1} I_2 ) } $, to obtain the following equivalent system:

\begin{equation}
\left( \begin{matrix}
 u \\ v
 \end{matrix}\right)= \left( \begin{matrix}
 0 & \Lambda \\ - \Lambda & 0
 \end{matrix}\right) \left( \begin{matrix}
 u \\ v
 \end{matrix}\right) + \left( \begin{matrix}
 H_1 (u,v) \\ H_2(u,v)
 \end{matrix}\right).
\end{equation}
Where
\begin{align}
H_1(u,v) &= -\dfrac{\left(  \left( -a_{{12}}c_{{1}}+a_{{11}}c_{{2}} \right) u+ \left( -
  \Lambda c_{{2}}\alpha_{{1}}I_{{2}}+  \Lambda a_{{12}
}\alpha_{{1}}-  \Lambda c_{{2}} \right) v \right)  \left(
 \left(  \Lambda+  \Lambda \alpha_{{1}}I_{{2}} \right) v-a_{{11}}u
 \right)
}{a_{{12}} \left(  \left( \alpha_{{1}}  \Lambda +  \Lambda {\alpha_{{1}}}^{2}I_{{2}} \right) v+a_{{12}}-\alpha_{{1}}a_{{
11}}u+a_{{12}}\alpha_{{1}}I_{{2}} \right)
}  \\
H_2(u,v) &= - \dfrac{1}{h(u,v)} \left[ (\Lambda(1+ \alpha_1 I_2)v-a_{11}u) \left( A_1 v^{2}+A_2 uv + A_3 v + A_4 u^{2} + A_5 u \right) \right]
\end{align}
And $A_1,...,A_5, h(u,v)$ are defined in appendix B.
Let
\begin{align}
\bar{a}_2 &= \dfrac{1}{16} [ (H_1)_{uuu} + (H_1)_{uvv} + (H_{2})_{uuv} + (H_2)_{vvv}  ] + \dfrac{1}{16( - \Lambda)}  [(H_1)_{uv} ((H_1)_{uu} + (H_1)_{vv}) \nonumber \\
& - (H_{2})_{uv} ( (H_{2})_{uu} + (H_2)_{vv} ) - (H_1)_{uu}(H_2)_{uu} + (H_1)_{vv} (H_2)_{vv}].
\end{align}
Where $$(H_1)_{uuu}= \dfrac{ \partial }{ \partial u } \left( \dfrac{ \partial }{ \partial u } \left( \dfrac{ \partial H_1}{ \partial u } \right) \right)(0,0) ,$$
and so on ($ \bar{a}_2 $ is explicitly expressed in appendix B) . \par
Then by theorem 3.4.2 of \cite{guckenheimer} if $ \bar{a }_{2} \neq 0  $ then there exist a surface of periodic solutions, if $ \bar{a}_{2}<0 $ then these cycles are stable, but if $ \bar{a}_{2}>0 $ then cycles are repelling. \par

\section{Discussion}

As we said in the introduction, traditional epidemic models have always stability results in terms of $\mathcal{R}_{0} $, such that we need only reduce $\mathcal{R}_{0} <1$ to eradicate the disease. However, including the treatment function brings new epidemic equilibria that make the dynamics of the model more complicated. Now, let's discuss some control strategies for the infectious disease, analysing the parameters of the treatment function ($ \alpha_{2}, \beta_{2} $)  and looking for conditions that allow us to eliminate the disease. We make this study by cases.\par
A first approach is focus on  the definition of $\mathcal{R}_{0} $, we can  see that $\mathcal{R}_{0} $ decreases when $ \beta_2 $ increases, so the first measure suggesting control is a big value for $ \beta_2 $. But this is not always a good way to proceed. Let us divide our analysis in the following cases: \par

\textit{Case 1: There is no positive endemic equilibrium for $\mathcal{R}_{0}  \leq 1$. }  This happens when $\mathcal{R}_{0} ^{*} \leq 1 $ ( by theorem \ref{teo4} )  or when $ \mathcal{R}_{0} ^{*}>1$ and $ ( \alpha_{2}, \beta_{2} ) \in A_1 \cup A_2  $ ( theorem \ref{teo4} , number 5). In this case if  $\mathcal{R}_{0} >1$ there is a unique positive endemic equilibrium, therefore there exists a bifurcation at $\mathcal{R}_{0} =1$  : from the disease free equilibrium, which is globally asymptotically stable for $0<\mathcal{R}_{0} <1$ (by theorem \ref{teo5})  and a saddle for $\mathcal{R}_{0} =1$ and $ \beta_2 \neq g( \alpha_2 ) $ (theorem \ref{teo7} ),  to the positive endemic equilibrium $E_2$ as $\mathcal{R}_{0} $ increase. $E_2$ will be locally asymptotic stable or unstable depending on theorem \ref{teo9}   or surrounded by a limit cycle  (theorem \ref{biftheo1} ) . If  conditions for Hopf bifurcation hold then the stability of the limit cycle is determined by $ \bar{a}_{2} $; when $ \bar{a}_2<0 $  the periodic orbit is stable and therefore $E_2$ is unstable, while if $ \bar{a}_2>0 $ then the periodic orbit is unstable and $E_2$ is stable . In this case the best way to eradicate the disease is finding parameters that allow $\mathcal{R}_{0} <1$, because then all the infectious states tend to $I=0$.   \par

\begin{figure}
\centering
\includegraphics[scale=0.5]{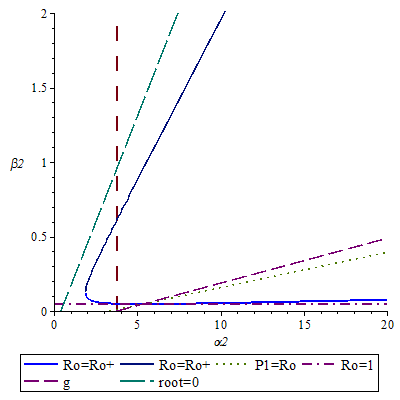}
\caption{Bifurcation diagram in terms of $ \beta_2 $ and $ \alpha_2 $. The values of the parameters taken are $ \alpha = 0.4, \beta = 0.3, b= 0.2, \gamma = 0.03, \delta = 0.05, p=0.3, m=0.3 $.  Here $ \mathcal{R}_{0} < \mathcal{R}_{0} ^{+} $ inside the solid curve ( $\mathcal{R}_{0} =\mathcal{R}_{0} ^{+} $) and $ \mathcal{R}_{0} > \mathcal{R}_{0} ^{+} $ outside it, whenever $( \beta_2, \alpha_2 ) $ is in its domain (under the long dashed line ``root=0'') . $\mathcal{R}_{0} <1$ above the  dot-lined line and $ \mathcal{R}_{0} >1 $ under it; $\mathcal{R}_{0} <P_1$ above the dotted line and $\mathcal{R}_{0} >P_1$ under that one. The areas $A_1, A_2,A_3$ are delimited by  the dashed  line $g=0$  and $ \alpha_2=3.8 $. In this case the endemic equilibria $E_1$ and $E_2$ exists both  in the area  delimited by the line $\mathcal{R}_{0} =1$ and the  dotted line $\mathcal{R}_{0} =P_1$ , while $E_2$ exists by itself under the line $\mathcal{R}_{0} =1$.} \label{biffig1}
\end{figure}

\textit{Case 2: There exist endemic equilibria for $\mathcal{R}_{0} \leq 1$}. This happens when  $ ( \alpha_{2}, \beta_{2} ) \in A_3 $. The existence of endemic equilibria is determined by the relationship between $\mathcal{R}_{0} $ and $ \max \{ P_1, \mathcal{R}_{0} ^{+} \} $. Let $F( \alpha_2, \beta_2 )= \mathcal{R}_{0} -\mathcal{R}_{0} ^{+}$, $G( \alpha_2, \beta_2 ) = \mathcal{R}_{0} - P_1$, and focus on the implicit curves defined by $F=0$ and $G=0$. These curves divide the domain $A_3$ in another ones (see figure \ref{biffig1} ):
\begin{align}
A_{3}^{1} & = \{( \alpha_2, \beta_2 ) \in A_3, 0< \mathcal{R}_{0} <\mathcal{R}_{0} ^{+}  \} \nonumber \\
A_{3}^{2} & = \{( \alpha_2, \beta_2 ) \in A_3,  \mathcal{R}_{0} >\mathcal{R}_{0} ^{+}  \} \nonumber \\
A_{3}^{3} & = \{( \alpha_2, \beta_2 ) \in A_3, 0< \mathcal{R}_{0} <P_1  \} \nonumber \\
A_{3}^{4} & = \{( \alpha_2, \beta_2 ) \in A_3,  \mathcal{R}_{0} > P_1   \}.
\end{align}

 If $ ( \alpha_{2}, \beta_{2} ) \in A_{3}^{2} \cap A_{3}^{4} $ then there exist two endemic equilibria $E_1$( a saddle ) and $E_2$ ( stable or unstable depending on conditions of theorems \ref{teo9}  and possibly with a periodic orbit around (theorems \ref{biftheo1} )), but when $\mathcal{R}_{0} =1$ one of them becomes negative, leaving us with $E_2$. In this case $\mathcal{R}_{0} <1$ is not a sufficient condition to control the disease, because even with $\mathcal{R}_{0} <1$ we have endemic positive equilibria that could be stable  and then the disease will tend to a non zero value; also we have the possibility of a periodic solution, or biologically, an outbreak that will apparently `` disappear '' but will re-emerge after some time. \par
The best way in this case is ensuring $ ( \alpha_{2}, \beta_{2} ) \in ( A_{3}^{2} \cap A_{3}^{4})^{c} $ because then we don't have endemic equilibria for $\mathcal{R}_{0} <1$ and the disease free will be globally asymptotically stable.


\appendix
\section{Computing center manifold}

The Jacobian matrix of system \eqref{carac6} is
\begin{equation}
DF(m,0)=\left(
\begin{matrix}
-b & - \beta m - bm + p \delta\\
0 & 0
\end{matrix}
\right)  . \label{eqst1}%
\end{equation}
 With eigenvalues $\lambda_{1}=-b$ and $\lambda_{2}=0$ and
 respective eigenvectors $v_{1}=(1,0)$ and $v_{2}=(-\frac{\gamma+\beta
_{2}+bm}{b},1)$. Using the eigenvectors to establish a new coordinate system
we define:%
\begin{equation}
\left(
\begin{matrix}
\hat{S}\\
I
\end{matrix}
\right)  =\left(
\begin{array}
[c]{cc}%
1 & -{\frac{\gamma+\beta_{2}+bm}{b}}\\
\noalign{\medskip}0 & 1
\end{array}
\right)  \left(
\begin{matrix}
u\\
v
\end{matrix}
\right)  \quad\text{, or}\quad\left(
\begin{matrix}
u\\
v
\end{matrix}
\right)  =\left(
\begin{array}
[c]{cc}%
1 & {\frac{\gamma+\beta_{2}+bm}{b}}\\
\noalign{\medskip}0 & 1
\end{array}
\right)  \left(
\begin{matrix}
S\\
I
\end{matrix}
\right).
\end{equation}
Under this transformation the system becomes
\begin{align}
\frac{du}{dt}  &  ={\frac{d}{dt}}\hat{S}\left(  t\right)  +{\frac{\left(
\gamma+\mathit{\beta_{2}}+bm\right)  {\frac{d}{dt}}I\left(  t\right)  }{b}%
}\nonumber\\
&  =-{\frac{\beta\,\left(  \hat{S}+m\right)  I}{1+\alpha\,I}}-b\hat
{S}-bmI+p\delta\,I+\left(  \gamma+\mathit{\beta_{2}}+bm\right) \nonumber\\
&  \left(  {\frac{\beta\,\left(  \hat{S}+m\right)  I}{1+\alpha\,I}}-\left(
p\delta+\gamma\right)  I-{\frac{\mathit{\beta_{2}}\,I}{1+\mathit{\alpha_{2}%
}\,I}}\right)  \frac{1}{b},\nonumber\\
\frac{dv}{dt}  &  =\frac{dI}{dt}\\
&  ={\frac{\beta\,\left(  \hat{S}+m\right)  I}{1+\alpha\,I}}-\left(
p\delta+\gamma\right)  I-{\frac{\mathit{\beta_{2}}\,I}{1+\mathit{\alpha_{2}%
}\,I}}.%
\end{align}

Substituting $S=u-{\frac{\left(  \gamma+\beta_2 +bm\right)  v}{b}},I=v$
and $\beta m=p\delta+\gamma+\beta_{2}$ we obtain:%
\begin{align}
\frac{dv}{dt}  &  =0u+f(v,u)\nonumber\\
\frac{du}{dt}  &  =-bu+g(v,u), \label{eqst2}%
\end{align}
where
\begin{align}
&  f(u,v)=-{\frac{v\left(  -\beta\,b-\beta\,b\mathit{\alpha_{2}}\,v\right)
u}{\left(  1+\alpha\,v\right)  \left(  1+\mathit{\alpha_{2}}\,v\right)  b}%
}\nonumber\\
&  -\frac{v}{\left(  1+\alpha\,v\right)  \left(  1+\mathit{\alpha_{2}%
}\,v\right)  b}(\left(  \beta\,bm\mathit{\alpha_{2}}+b\gamma\,\alpha
\,\mathit{\alpha_{2}}-\beta\,\mathit{\alpha_{2}}\,p\delta+bp\delta
\,\alpha\,\mathit{\alpha_{2}}+{\beta}^{2}\mathit{\alpha_{2}}\,m\right)
{v}^{2}\nonumber\\
&  +\left(  bp\delta\,\mathit{\alpha_{2}}+\beta\,bm-\beta\,bm\mathit{alpha2}%
+b\gamma\,\mathit{\alpha_{2}}+{\beta}^{2}m-\beta\,p\delta+b\alpha
\,\beta\,m\right)  v),\nonumber\\
&  g(u,v)=-\frac{1}{\left(  1+\alpha\,v\right)  \left(  1+\mathit{\alpha_{2}%
}\,v\right)  b^{2}}[v((m{b}^{2}\gamma\,\alpha\,\mathit{\alpha_{2}}+2\,{\beta
}^{2}b{m}^{2}\mathit{\alpha_{2}}+\beta\,\mathit{\alpha_{2}}\,{p}^{2}{\delta
}^{2}\nonumber \\
&+{\beta}^{3}\mathit{\alpha_{2}}\,{m}^{2}-b\gamma\,p\delta\,\alpha
\,\mathit{\alpha_{2}}+b\gamma\,\alpha\,\mathit{\alpha_{2}}\,\beta\,m-2\,{\beta}^{2}%
\mathit{\alpha_{2}}\,mp\delta+bp\delta\,\alpha\,\mathit{\alpha_{2}}%
\,\beta\,m\nonumber \\
&-{b}^{2}m\alpha\,\mathit{\alpha_{2}}\,\beta-{\beta}^{2}%
bm\mathit{\alpha_{2}}+{b}^{2}{m}^{2}\beta\,\mathit{\alpha_{2}}+{b}^{2}%
mp\delta\,\alpha\,\mathit{\alpha_{2}}-b{p}^{2}{\delta}^{2}\alpha\,\mathit{\alpha_{2}}\nonumber \\
&-2\,\beta\,bm\mathit{\alpha_{2}}\,p\delta-{b}^{2}m\beta\,\mathit{\alpha_{2}}+b\beta\,\mathit{\alpha_{2}%
}\,p\delta)v^{2}+({b}^{2}{m}^{2}\beta-2\,\beta\,bmp\delta-\beta\,{b}%
^{2}m+2\,{\beta}^{2}b{m}^{2}\nonumber\\
&  -{\beta}^{2}bm+\beta\,{p}^{2}{\delta}^{2}+2\,\beta\,bm\mathit{\alpha_{2}%
}\,p\delta-bp\delta\,\alpha\,\beta\,m+{\beta}^{3}{m}^{2}+\beta\,bp\delta
-{b}^{2}\alpha\,\beta\,m-2\,{\beta}^{2}mp\delta\nonumber \\
&+{b}^{2}{m}^{2}\alpha\,\beta+b\alpha\,{\beta}^{2}{m}^{2}-u\beta\,{b}^{2}m\alpha_{{2}}-b{\beta}%
^{2}u\alpha_{{2}}m+b\beta\,u\alpha_{{2}}p\delta-\gamma\,bp\delta\,\alpha_{{2}%
}+\gamma\,\beta\,bm\alpha_{{2}}\nonumber \\
&+{b}^{2}mp\delta\,\alpha_{{2}}-{b}^{2}{m}%
^{2}\beta\,\alpha_{{2}}+u\beta\,{b}^{2}\alpha_{{2}}-b{p}^{2}{\delta}^{2}\alpha_{{2}}-{\beta}%
^{2}b{m}^{2}\alpha_{{2}}+{b}^{2}m\gamma\,\alpha_{{2}})v-{b}^{2}m\beta
\,u\nonumber \\
&+u\beta\,{b}^{2}-b{\beta}^{2}um+b\beta\,up\delta)].
\end{align}

By \cite{carr} the system (\ref{eqst2}) has a center manifold of the form
$u=h(v)$. Let $\phi:\mathbb{R}\rightarrow\mathbb{R}$ and define the
annihilator:
\begin{align}
N\phi &  =\phi^{\prime}(v)(f(v,\phi(v)))+b\phi-g(v,\phi(v))\nonumber\\
&  =\frac{1}{b^{2}(1+\alpha v)(1+\alpha_{2}v)}[bp\delta\,\alpha\,{v}^{3}%
\alpha_{{2}}\beta\,m+{b}^{2}{m}^{2}\beta\,{v}^{2}-\beta\,{v}^{2}{b}^{2}%
m+{b}^{3}\phi+{b}^{3}\phi\,\alpha\,v\nonumber \\
&+{b}^{3}\phi\,\alpha_{{2}}v+{b}^{2}m\gamma\,\alpha_{2}\,{v}^{2}+\phi\,\beta\,v{b}^{2}+vb\phi
\,\beta\,p\delta+{b}^{2}mp\delta\,\alpha_{{2}}{v}^{2}-\phi\,\beta\,{v}^{2}{b}^{2}m\alpha_{{2}}\nonumber \\
&-\gamma\,bp\delta\,\alpha_{{2}}{v}^{2}+{b}^{2}m\gamma\,\alpha\,{v}^{3}\alpha_{{2}}+\gamma\,\beta\,{v}^{2}%
bm\alpha_{{2}}+{b}^{2}{m}^{2}\alpha\,{v}^{2}\beta-2\,{\beta}^{2}{v}%
^{2}mp\delta-{\beta}^{2}{v}^{2}b{m}^{2}\alpha_{2}\nonumber \\
&+\beta\,{v}^{2}bp\delta\-{b}^{2}{v}^{2}\alpha\,\beta\,m+b\alpha\,{v}^{2}{\beta}^{2}{m}^{2}%
-bp\delta\,\alpha\,{v}^{2}\beta\,m+{\beta}^{3}{v}^{2}{m}^{2}-2\,\beta\,{v}%
^{2}bmp\delta+{\beta}^{3}{v}^{3}\alpha_{{2}}{m}^{2}\nonumber \\
&+2\,{\beta}^{2}{v}^{2}b{m}^{2}+\beta\,{v}^{2}{p}^{2}{\delta}^{2}-{\beta}^{2}{v}^{2}bm-\phi\,\beta
\,v{b}^{2}m-vb\phi\,{\beta}^{2}m+\beta\,{v}^{3}b\alpha_{{2}}p\delta-{b}^{2}%
{v}^{3}\alpha\,\alpha_{{2}}\beta\,m\nonumber \\
&-b{p}^{2}{\delta}^{2}\alpha\,{v}^{3}\alpha_{{2}}-2\,{\beta}^{2}{v}^{3}\alpha_{{2}}mp\delta+\phi\,\beta\,{v}^{2}{b}%
^{2}\alpha_{{2}}-{b}^{2}{m}^{2}\beta\,{v}^{2}\alpha_{{2}}+{b}^{2}{m}^{2}%
\beta\,{v}^{3}\alpha_{{2}}-\beta\,{v}^{3}{b}^{2}m\alpha_{{2}}\nonumber \\
&-{\beta}^{2}{v}^{3}b\alpha_{{2}}m+2\,{\beta}^{2}{v}^{3}b{m}^{2}\alpha_{{2}}-b{p}^{2}{\delta}^{2}{v}%
^{2}\alpha_{{2}}+\beta\,{v}^{3}\alpha_{{2}}{p}^{2}{\delta}^{2}+{b}^{3}%
\phi\,\alpha\,{v}^{2}\alpha_{{2}}-{v}^{2}b\phi\,{\beta}^{2}m\alpha_{{2}}\nonumber \\
&+{v}^{2}b\phi\,\beta\,p\delta\,\alpha_{{2}}+b\gamma\,\alpha\,{v}^{3}\alpha_{{2}}\beta\,m+{b}^{2}mp\delta\,\alpha
\,{v}^{3}\alpha_{{2}}-\gamma\,bp\delta\,\alpha\,{v}^{3}\alpha_{{2}}-2\,\beta\,{v}^{3}bm\alpha_{{2}}p\delta\nonumber \\
&+2\,\beta\,{v}^{2}bm\alpha_{{2}}p\delta].
\end{align}
Assume that $\phi=a_{0}v^{2}+a_{1}v^{3}+O(v^{4})$, then by substituting $\phi$
and $\frac{d\phi}{dv}$ in the annihilator $N\phi$ and expanding its Taylor
series we get:
\begin{align}
N\phi &  =\frac{1}{b^{2}}((\gamma\,\beta\,bm\alpha_{2}+{b}^{2}mp\delta
\,\alpha_{2}-{b}^{2}{m}^{2}\beta\,\mathit{\alpha_{2}}+2\,\beta
\,bm\mathit{\alpha_{2}}\,p\delta+2\,{\beta}^{2}b{m}^{2}+{b}^{2}{m}^{2}%
\beta\nonumber\\
&  -\beta\,{b}^{2}m+{b}^{3}\mathit{a_{0}}-{\beta}^{2}b{m}^{2}\mathit{\alpha
_{2}}-2\,\beta\,bmp\delta+{b}^{2}m\gamma\,\mathit{\alpha_{2}}-\gamma
\,bp\delta\,\mathit{\alpha_{2}}-{b}^{2}\alpha\,\beta\,m-{\beta}^{2}%
bm\nonumber\\
&  +b\alpha\,{\beta}^{2}{m}^{2}+{b}^{2}{m}^{2}\alpha\,\beta-2\,{\beta}%
^{2}mp\delta+\beta\,{p}^{2}{\delta}^{2}-bp\delta\,\alpha\,\beta\,m-b{p}%
^{2}{\delta}^{2}\mathit{\alpha_{2}}+\beta\,bp\delta\nonumber \\
&+{\beta}^{3}{m}^{2})v^{2}-\frac{1}{b^{2}}[\alpha\,{\beta}^{3}{m}^{2}-\mathit{a_{0}}\,\beta\,{b}%
^{2}-{b}^{3}\mathit{a_{1}}-2\,bp\delta\,\alpha\,\beta\,m-{\beta}^{2}b{m}%
^{2}{\mathit{\alpha_{2}}}^{2}-b{p}^{2}{\delta}^{2}{\mathit{\alpha_{2}}}^{2}\nonumber \\
&+m{b}^{2}\gamma\,{\mathit{\alpha_{2}}}^{2}-{b}^{2}{m}^{2}\beta
\,{\mathit{\alpha_{2}}}^{2}-{b}^{2}\alpha\,\beta\,m-{b}^{2}{\alpha}^{2}\beta\,m-\alpha\,{\beta}%
^{2}bm+b{\alpha}^{2}{\beta}^{2}{m}^{2}+{b}^{2}{m}^{2}{\alpha}^{2}\beta\nonumber \\
&+\alpha\,\beta\,{p}^{2}{\delta}^{2}+3\,\mathit{a_{0}}\,\beta\,{b}^{2}m+3\,\mathit{a_{0}}\,b{\beta}^{2}m+2\,\mathit{a_{0}}\,{b}^{2}\gamma
\,\mathit{\alpha_{2}}+2\,\mathit{a_{0}}\,{b}^{2}p\delta\,\mathit{\alpha_{2}%
}-2\,\mathit{a_{0}}\,\beta\,{b}^{2}m\mathit{\alpha_{2}}\nonumber \\
&-3\,\mathit{a_{0}}\,b\beta\,p\delta+2\,\mathit{a_{0}}\,{b}^{2}\alpha\,\beta\,m-2\,\alpha\,{\beta}^{2}mp\delta+\alpha\,\beta\,bp\delta+{b}^{2}%
mp\delta\,{\mathit{\alpha_{2}}}^{2}+b\gamma\,{\mathit{\alpha_{2}}}^{2}\beta\,m\nonumber \\
&-b\gamma\,p\delta\,{\mathit{\alpha_{2}}}^{2}+2\,\beta\,bm{\mathit{\alpha_{2}}}^{2}p\delta-bp\delta\,{\alpha}^{2}\beta\,m+{b}^{2}{m}^{2}\alpha\,
\beta+2\,b\alpha\,{\beta}^{2}{m}^{2}]v^{3}+O(v^{4})).
\end{align}
By choosing the coefficients of $v^{2}$ and $v^{3}$ in order to have
$N\phi=O(v^{4})$ we obtain that $a_{0}$ and $a_{1}$ must be the following:
\begin{align}
a_{0}  &  =-\frac{1}{b^{3}}[{b}^{2}{m}^{2}\beta+\beta\,bp\delta-{b}^{2}%
\alpha\,\beta\,m+{b}^{2}mp\delta\,\alpha_{2}-\gamma\,bp\delta\,\alpha
_{2}+\gamma\,\beta\,bm\alpha_{2}-2\,\beta\,bmp\delta\nonumber\\
&  -\beta\,{b}^{2}m+2\,{\beta}^{2}b{m}^{2}-{\beta}^{2}bm+\beta\,{p}^{2}%
{\delta}^{2}+2\,\beta\,bm\alpha_{2}\,p\delta-bp\delta\,\alpha\,\beta
\,m+{\beta}^{3}{m}^{2}-b{p}^{2}{\delta}^{2}\alpha_{2}\nonumber\\
&  -{b}^{2}{m}^{2}\beta\,\alpha_{2}-2\,{\beta}^{2}mp\delta+{b}^{2}%
m\gamma\,\alpha_{2}+b\alpha\,{\beta}^{2}{m}^{2}+{b}^{2}{m}^{2}\alpha
\,\beta-{\beta}^{2}b{m}^{2}\alpha_{2}],\\
a_{1}  &  =\frac{1}{b^{3}}[\alpha\,{\beta}^{3}{m}^{2}-ao\,\beta\,{b}%
^{2}-2\,bp\delta\,\alpha\,\beta\,m-{\beta}^{2}b{m}^{2}\alpha_{2}-b{p}%
^{2}{\delta}^{2}{\mathit{\alpha_{2}}}^{2}+m{b}^{2}\gamma\,\alpha_{2}^{2}%
-{b}^{2}{m}^{2}\beta\alpha_{2}^{2}\nonumber\\
&  -{b}^{2}\alpha\,\beta\,m-{b}^{2}{\alpha}^{2}\beta\,m-\alpha\,{\beta}%
^{2}bm+b{\alpha}^{2}{\beta}^{2}{m}^{2}+{b}^{2}{m}^{2}{\alpha}^{2}\beta
+\alpha\,\beta\,{p}^{2}{\delta}^{2}+3\,\mathit{a_{0}}\,\beta\,{b}%
^{2}m\nonumber\\
&  +3\,a_{0}\,b{\beta}^{2}m+2\,a_{0}\,{b}^{2}\gamma\,\alpha_{2}%
+2\,\mathit{a_{0}}\,{b}^{2}p\delta\,\alpha_{2}-2\,\mathit{a_{0}}\,\beta
\,{b}^{2}m\mathit{\alpha_{2}}-3\,\mathit{a_{0}}\,b\beta\,p\delta
+2\,\mathit{a_{0}}\,{b}^{2}\alpha\,\beta\,m\nonumber\\
&  -2\,\alpha\,{\beta}^{2}mp\delta+\alpha\,\beta\,bp\delta+{b}^{2}%
mp\delta\,{\mathit{\alpha_{2}}}^{2}+b\gamma\,{\mathit{\alpha_{2}}}^{2}%
\beta\,m-b\gamma\,p\delta\,{\mathit{\alpha_{2}}}^{2}+2\,\beta
\,bm{\mathit{\alpha_{2}}}^{2}p\delta\nonumber\\
&-bp\delta\,{\alpha}^{2}\beta\,m+{b}^{2}{m}^{2}\alpha\,\beta+2\,b\alpha\,{\beta}^{2}{m}^{2}].
\end{align}
Hence $h(v)=a_{0}v^{2}+a_{1}v^{3}+O(v^{4})$.

\section{Hopf bifurcation}

To analyze the behaviour of the solutions of \eqref{ruanmod} when $  s=0 $ we make a change of coordinates $x=S-S_2$, $y=I-I_2$, to obtain a new equivalent system to \eqref{ruanmod} with an equilibrium in $(0,0)$ in the $x-y$ plane. Under this change the system becomes in:
\begin{align}
\frac{dx}{dt} &= {\frac {a_{{11}}x+a_{{12}}y+c_{{1}}xy+c_{{2}}{y}^{2} + c_7}{1+\alpha_{{1}}y+
\alpha_{{1}}I_{{2}}}}, \nonumber \\
\frac{dy}{dt} &= {\frac {a_{{21}}x+a_{{22}}y+c_{{3}}xy+c_{{4}}x{y}^{2}+c_{{5}}{y}^{2}+c
_{{6}}{y}^{3} + c_8}{ \left( 1+\alpha_{{1}}y+\alpha_{{1}}I_{{2}} \right)
 \left( 1+\alpha_{{2}}y+\alpha_{{2}}I_{{2}} \right) }}.  \label{hmrhb2}
\end{align}
Where:
\begin{align}
a_{11} &= -b-\beta_{{1}}I_{{2}}-b\alpha_{{1}}I_{{2}} \\
a_{12} &= -2\,bm\alpha_{{1}}I_{{2}}+bm\alpha_{{1}}-b\alpha_{{1}}S_{{2}}+2\,p
\delta\,\alpha_{{1}}I_{{2}}+p\delta-bm-\beta_{{1}}S_{{2}} \\
c_{1} &=-b\alpha_{{1}}-\beta_{{1}} \\
c_{2} &= -bm\alpha_{{1}}+p\delta\,\alpha_{{1}} \\
a_{21} &=-I_{{2}} \left( -\beta_{{1}}-\beta_{{1}}\alpha_{{2}}I_{{2}} \right) \\
a_{22} &= -2\,p\delta\,\alpha_{{1}}I_{{2}}+2\,\beta_{{1}}\alpha_{{2}}S_{{2}}I_{{
2}}-3\,p\delta\,\alpha_{{1}}\alpha_{{2}}{I_{{2}}}^{2}-2\,\gamma\,
\alpha_{{1}}I_{{2}} \nonumber \\
&-2\,\gamma\,\alpha_{{2}}I_{{2}}-2\,p\delta\,\alpha_
{{2}}I_{{2}}-2\,\beta_{{2}}\alpha_{{1}}I_{{2}}-3\,\gamma\,\alpha_{{1}}
\alpha_{{2}}{I_{{2}}}^{2}-\gamma-p\delta-\beta_{{2}}+\beta_{{1}}S_{{2}
} \\
c_{3} &= 2\,\beta_{{1}}\alpha_{{2}}I_{{2}}+\beta_{{1}} \\
c_{4} &= \beta_{{1}}\alpha_{{2}}{y}^{2} \\
c_{5} &= -3\,p\delta\,\alpha_{{1}}\alpha_{{2}}I_{{2}}-3\,\gamma\,\alpha_{{1}}
\alpha_{{2}}I_{{2}}-p\delta\,\alpha_{{1}}+\beta_{{1}}\alpha_{{2}}S_{{2
}}-\gamma\,\alpha_{{1}}-\gamma\,\alpha_{{2}}-p\delta\,\alpha_{{2}}\nonumber \\
&-\beta_{{2}}\alpha_{{1}} \\
c_{6} &= -p\delta\,\alpha_{{1}}\alpha_{{2}}-\gamma\,\alpha_{{1}}\alpha_{{2}} \\
c_7 &= -(\beta_{{1}}S_{{2}}I_{{2}}-bm\alpha_{{1}}I_{{2}}+bS_{{2}}-p\delta\,I_{{
2}}-p\delta\,\alpha_{{1}}{I_{{2}}}^{2}+b\alpha_{{1}}S_{{2}}I_{{2}}+bmI
_{{2}} \nonumber \\
&-bm+bm\alpha_{{1}}{I_{{2}}}^{2})\\
c_8 &=- I_2 [p\delta\,\alpha_{{1}}I_{{2}}+p\delta+p\delta\,\alpha_{{2}}I_{{2}}+
\gamma\,\alpha_{{2}}I_{{2}}-\beta_{{1}}\alpha_{{2}}S_{{2}}I_{{2}}+
\gamma\,\alpha_{{1}}I_{{2}}+\beta_{{2}}\alpha_{{1}}I_{{2}} \nonumber \\
&+\gamma+ \gamma\,\alpha_{{1}}\alpha_{{2}}{I_{{2}}}^{2}-\beta_{{1}}S_{{2}}+\beta
_{{2}}+p\delta\,\alpha_{{1}}\alpha_{{2}}{I_{{2}}}^{2}].
\end{align}
But from the equations for the equilibrium point we can prove that $c_7=c_8=0$, so the system we will work on is
\begin{align}
\frac{dx}{dt} &= {\frac {a_{{11}}x+a_{{12}}y+c_{{1}}xy+c_{{2}}{y}^{2} }{1+\alpha_{{1}}y+
\alpha_{{1}}I_{{2}}}}, \nonumber \\
\frac{dy}{dt} &= {\frac {a_{{21}}x+a_{{22}}y+c_{{3}}xy+c_{{4}}x{y}^{2}+c_{{5}}{y}^{2}+c
_{{6}}{y}^{3} }{ \left( 1+\alpha_{{1}}y+\alpha_{{1}}I_{{2}} \right)
 \left( 1+\alpha_{{2}}y+\alpha_{{2}}I_{{2}} \right) }}.
\end{align}
If we denote system \eqref{ruanmod} as $ (S,I)' = f(S,I) $ and system \eqref{hmrhb2} as $(x,y)' = F(x,y)$, $ f=(f_1,f_2) $, $F=(F_1,F_2)$ then
$$ F(x,y) = f(x+S_2,y+I_2),  $$
and
$$  \frac{ \partial F_i }{  \partial x } (x,y) = \frac{ \partial f_i }{ \partial S } (x+S_2,y+I_2)\frac{ \partial S }{ \partial x} (x,y) + \frac{ \partial f_i }{ \partial I } (x+S_2,y+I_2)\frac{ \partial I }{ \partial x} (x,y)=  \frac{ \partial f_i }{ \partial S } (x+S_2,y+I_2)$$
$$ \frac{ \partial F_i }{  \partial y } (x,y) = \frac{ \partial f_i }{ \partial S } (x+S_2,y+I_2)\frac{ \partial S }{ \partial y} (x,y) + \frac{ \partial f_i }{ \partial I } (x+S_2,y+I_2)\frac{ \partial I }{ \partial y} (x,y)=  \frac{ \partial f_i }{ \partial S } (x+S_2,y+I_2) .$$
So, the jacobian matrix  of \eqref{hmrhb1} $ DF(0,0) $ in the equilibrium is equal to the jacobian matrix of system \eqref{ruanmod} $Df(S_1,I_1)$. We can also compute the partial derivatives of system \eqref{hmrhb2} and \eqref{hmrhb1} to prove that they are equal,ie,
\begin{equation}
Df(S_2,I_2) = DF(0,0).
\end{equation}
 Therefore the system \eqref{hmrhb1} and \eqref{ruanmod} are equivalent and we can work with system \eqref{hmrhb1}. The jacobian matrix $DF(0,0)$ of \eqref{hmrhb1} is:
\begin{equation}
DF(0,0)=  \left[ \begin {array}{cc} {\dfrac {a_{{11}}}{1+\alpha_{{1}}I_{{2}}}}&
{\dfrac {a_{{12}}}{ \left( 1+\alpha_{{1}}I
_{{2}} \right) }}\\ \noalign{\medskip}{\dfrac {a_{{21}}}{ \left( 1+
\alpha_{{2}}I_{{2}} \right)  \left( 1+\alpha_{{1}}I_{{2}} \right) }}&{
\dfrac {a_{{22}}}{ \left( 1+\alpha_{{2}}I_{{2}} \right)  \left( 1+
\alpha_{{1}}I_{{2}} \right) }}\end {array} \right].
\end{equation}

  So system \eqref{hmrhb1} can be rewritten as

\begin{align}
\dfrac{dx}{dt} &= {\dfrac {a_{{11}}x}{1+\alpha_{{1}}I_{{2}}}}+{\frac {a_{{12}}y}{1+\alpha
_{{1}}I_{{2}}}} + G_1 (x,y) \\
\dfrac{dy}{dt} &= {\frac {a_{{21}}x}{ \left( 1+\alpha_{{1}}I_{{2}} \right)  \left( 1+
\alpha_{{2}}I_{{2}} \right) }}+{\frac {a_{{22}}y}{ \left( 1+\alpha_{{1
}}I_{{2}} \right)  \left( 1+\alpha_{{2}}I_{{2}} \right) }} + G_2(x,y).
\end{align}

Where

\begin{align}
G_1 &= \dfrac{1}{(1 + \alpha_1 y+ \alpha_1 I_2)(1 + \alpha_1 I_2)} \{[(1 + \alpha_1 I_2)c_1-a_{11} \alpha_1] xy + [ c_2 ( 1 + \alpha_1 I_2 ) - \alpha_1 a_{12} ] y^{2} \} \\
G_2 &= \dfrac{1}{(1 + \alpha_1 y+ \alpha_1 I_2)(1 + \alpha_2 y+ \alpha_2 I_2)(1 + \alpha_1 I_2)(1 + \alpha_2 I_2)} \{ [c_3 (1 + \alpha_1 I_2)(1 + \alpha_2 I_2)\nonumber \\ &- a_{21} ( \alpha_2 + \alpha_1 + 2 \alpha_1 \alpha_2 I_2 ) ] x y + [ c_4 ( 1 + \alpha_1 I_2 )( 1 + \alpha_2 I_2 ) - a_{21} \alpha_1 \alpha_2 ] xy^{2}\nonumber \\ &+ [c_{5}( 1 + \alpha_1 I_2 )( 1 + \alpha_2 I_2) - a_{22} ( \alpha_2 + \alpha_1 + 2 \alpha_1 \alpha_2 I_1 )  ] y^{2} + [ c_{6} ( 1 + \alpha_1 I_2 )( 1 + \alpha_2 I_2 )\nonumber \\
&- a_{22} \alpha_1 \alpha_2 ] y^{3} \}.
\end{align}

We need the normal form of the system \eqref{hmrhb1}. The eigenvalues of $ DF(0,0) $ when $s_2=0$ and (i),(ii) are satisfied are:
$$   \Lambda i , - \Lambda i .$$
With complex eigenvector $$ v= \left( \begin{matrix}
- 1 \\ \dfrac{ - \Lambda i(1 + \alpha_1 I_2)+ a_{11} }{a_{12}}
 \end{matrix}\right), \quad \bar{v} = \left( \begin{matrix}
- 1 \\ \dfrac{  \Lambda i(1 + \alpha_1 I_2)+ a_{11} }{a_{12}}
 \end{matrix}\right) .$$
 Using the Jordan Canonical form of matrix $DF(0,0)$ and the procedure in \cite{perko} (p. 107, 108) we use the change of variable $u=x, v= \frac{a_{11}}{ \Lambda( 1+ \alpha_1  I_2)} + \frac{a_{12} y}{  \Lambda(1 + \alpha_{1} I_2 ) } $, to obtain the following equivalent system:

\begin{equation}
\left( \begin{matrix}
 u \\ v
 \end{matrix}\right)= \left( \begin{matrix}
 0 & \Lambda \\ - \Lambda & 0
 \end{matrix}\right) \left( \begin{matrix}
 u \\ v
 \end{matrix}\right) + \left( \begin{matrix}
 H_1 (u,v) \\ H_2(u,v)
 \end{matrix}\right).
\end{equation}
Where
\begin{align}
H_1(u,v) &= -\dfrac{\left(  \left( -a_{{12}}c_{{1}}+a_{{11}}c_{{2}} \right) u+ \left( -
  \Lambda c_{{2}}\alpha_{{1}}I_{{2}}+  \Lambda a_{{12}
}\alpha_{{1}}-  \Lambda c_{{2}} \right) v \right)  \left(
 \left(  \Lambda+  \Lambda \alpha_{{1}}I_{{2}} \right) v-a_{{11}}u
 \right)
}{a_{{12}} \left(  \left( \alpha_{{1}}  \Lambda +  \Lambda {\alpha_{{1}}}^{2}I_{{2}} \right) v+a_{{12}}-\alpha_{{1}}a_{{
11}}u+a_{{12}}\alpha_{{1}}I_{{2}} \right)
}  \\
H_2(u,v) &= - \dfrac{1}{h(u,v)} \left[ (\Lambda(1+ \alpha_1 I_2)v-a_{11}u) \left( A_1 v^{2}+A_2 uv + A_3 v + A_4 u^{2} + A_5 u \right) \right].
\end{align}
   And:

   \begin{align*}
A_1 &= {\Lambda}^{2} \left( 1+\alpha_{{1}}I_{{2}} \right) ^{2} [ -a_{{12
}}c_{{6}}\alpha_{{2}}{I_{{2}}}^{2}\alpha_{{1}}-a_{{11}}c_{{2}}\alpha_{
{1}}{I_{{2}}}^{2}{\alpha_{{2}}}^{2}-a_{{11}}c_{{2}}\alpha_{{1}}I_{{2}}
\alpha_{{2}}\nonumber \\ 
&-a_{{12}}c_{{6}}\alpha_{{1}}I_{{2}}+a_{{11}}a_{{12}}\alpha
_{{1}}{\alpha_{{2}}}^{2}I_{{2}}+a_{{11}}a_{{12}}\alpha_{{1}}\alpha_{{2
}} +a_{{12}}a_{{22}}\alpha_{{1}}\alpha_{{2}}\nonumber \\
&-a_{{11}}c_{{2}}{\alpha_{{2
}}}^{2}I_{{2}}-a_{{12}}c_{{6}}\alpha_{{2}}I_{{2}}-a_{{11}}c_{{2}}
\alpha_{{2}}-a_{{12}}c_{{6}} ]
 \\
A_2 &= -\Lambda\, \left( 1+\alpha_{{1}}I_{{2}} \right)  [ a_{{11}}a_{{12
}}c_{{1}}{\alpha_{{2}}}^{2}\alpha_{{1}}{I_{{2}}}^{2}+{a_{{12}}}^{2}c_{
{4}}\alpha_{{2}}{I_{{2}}}^{2}\alpha_{{1}}-2\,a_{{12}}a_{{11}}c_{{6}}
\alpha_{{2}}{I_{{2}}}^{2}\alpha_{{1}}\nonumber \\
&-2\,c_{{2}}\alpha_{{1}}{I_{{2}}}^{2}{\alpha_{{2}}}^{2}{a_{{11}}}^{2} +a_{{12}}\alpha_{{1}}{\alpha_{{2}}}^{2}{a_{{11}}}^{2}I_{{2}}-2\,a_{{12}}a_{{11}}c_{{6}}\alpha_{{1}}I_{{2}
}-2\,c_{{2}}\alpha_{{1}}I_{{2}}\alpha_{{2}}{a_{{11}}}^{2}\nonumber \\
&+{a_{{12}}}^{2}c_{{4}}\alpha_{{1}}I_{{2}}+a_{{11}}a_{{12}}c_{{1}}\alpha_{{2}}\alpha
_{{1}}I_{{2}}+a_{{11}}a_{{12}}c_{{1}}{\alpha_{{2}}}^{2}I_{{2}}+{a_{{12
}}}^{2}c_{{4}}\alpha_{{2}}I_{{2}}-2\,a_{{12}}a_{{11}}c_{{6}}\alpha_{{2
}}I_{{2}}\nonumber \\
&-2\,{a_{{11}}}^{2}c_{{2}}{\alpha_{{2}}}^{2}I_{{2}}+{a_{{12}}}
^{2}c_{{4}}-2\,{a_{{11}}}^{2}c_{{2}}\alpha_{{2}}-2\,a_{{12}}a_{{11}}c_{{6}}+a_{{11}}a_{{12}}c_{{1}}\alpha_{{2}}+a_{{12}}\alpha_{{1}}\alpha_{
{2}}{a_{{11}}}^{2}\nonumber \\
&-{a_{{12}}}^{2}\alpha_{{1}}a_{{21}}\alpha_{{2}}+2\,a_{{12}}a_{{11}}a_{{22}}\alpha_{{1}}\alpha_{{2}} ] \\
A_3 &= \Lambda\, \left( 1+\alpha_{{1}}I_{{2}} \right) a_{{12}} [ -a_{{12
}}c_{{5}}\alpha_{{1}}{I_{{2}}}^{2}\alpha_{{2}}+a_{{12}}a_{{11}}\alpha_
{{1}}{\alpha_{{2}}}^{2}{I_{{2}}}^{2}+2\,a_{{12}}a_{{22}}\alpha_{{1}}
\alpha_{{2}}I_{{2}}\nonumber \\
&+2\,a_{{12}}a_{{11}}\alpha_{{1}}\alpha_{{2}}I_{{2}}
-a_{{12}}c_{{5}}\alpha_{{1}}I_{{2}}+a_{{12}}a_{{22}}\alpha_{{1}}+a_{{
11}}a_{{12}}\alpha_{{1}}-a_{{12}}c_{{5}}\alpha_{{2}}I_{{2}}+a_{{12}}a_
{{22}}\alpha_{{2}}\nonumber \\
&-2\,a_{{11}}c_{{2}}\alpha_{{1}}{I_{{2}}}^{2}\alpha_{{2}}-a_{{11}}c_{{2}}\alpha_{{1}}I_{{2}}-a_{{11}}c_{{2}}
-a_{{11}}c_{{2}}{\alpha_{{2}}}^{2}{I_{{2}}}^{2}-2\,a_{{11}}c_{{2}}\alpha_{{2}}I_{{2}} ] \\
A_4 &= -a_{{11}} [-{a_{{12}}}^{2}c_{{4}}\alpha_{{2}}I_{{2}}-a_{{11}}a_{
{12}}c_{{1}}\alpha_{{2}}\alpha_{{1}}I_{{2}}+c_{{2}}\alpha_{{1}}I_{{2}}
\alpha_{{2}}{a_{{11}}}^{2}-{a_{{12}}}^{2}c_{{4}}\alpha_{{2}}{I_{{2}}}^
{2}\alpha_{{1}}\nonumber \\
&-{a_{{12}}}^{2}c_{{4}}\alpha_{{1}}I_{{2}}-{a_{{12}}}^{2
}c_{{4}}+{a_{{12}}}^{2}\alpha_{{1}}a_{{21}}\alpha_{{2}}-a_{{12}}a_{{11
}}a_{{22}}\alpha_{{1}}\alpha_{{2}}-a_{{11}}a_{{12}}c_{{1}}{\alpha_{{2}
}}^{2}\alpha_{{1}}{I_{{2}}}^{2}\nonumber \\
&+a_{{12}}a_{{11}}c_{{6}}+{a_{{11}}}^{2}
c_{{2}}{\alpha_{{2}}}^{2}I_{{2}}+a_{{12}}a_{{11}}c_{{6}}\alpha_{{2}}{I
_{{2}}}^{2}\alpha_{{1}}+{a_{{11}}}^{2}c_{{2}}\alpha_{{2}}-a_{{11}}a_{{
12}}c_{{1}}\alpha_{{2}}\nonumber \\
&+a_{{12}}a_{{11}}c_{{6}}\alpha_{{2}}I_{{2}}+a_{{12}}a_{{11}}c_{{6}}\alpha_{{1}}I_{{2}}+c_{{2}}\alpha_{{1}}{I_{{2}}}^{
2}{\alpha_{{2}}}^{2}{a_{{11}}}^{2}-a_{{11}}a_{{12}}c_{{1}}{\alpha_{{2}}}^{2}I_{{2}} ] \\
A_5 &= a_{{12}} [ 2\,{a_{{11}}}^{2}c_{{2}}\alpha_{{2}}I_{{2}}+{a_{{11}}}
^{2}c_{{2}}\alpha_{{1}}I_{{2}}+{a_{{12}}}^{2}\alpha_{{1}}a_{{21}}+{a_{
{11}}}^{2}c_{{2}}{\alpha_{{2}}}^{2}{I_{{2}}}^{2}-a_{{12}}a_{{11}}a_{{
22}}\alpha_{{2}}\nonumber \\
&-a_{{12}}a_{{11}}a_{{22}}\alpha_{{1}}-{a_{{12}}}^{2}c_{{3}}\alpha_{{2}}I_{{2}}-{a_{{12}}}^{2}c_{{3}}\alpha_{{1}}I_{{2}}+a_{{
11}}a_{{12}}c_{{5}}+{a_{{12}}}^{2}\alpha_{{2}}a_{{21}}-a_{{12}}a_{{11}
}c_{{1}}\nonumber \\
&+2\,{a_{{12}}}^{2}\alpha_{{1}}a_{{21}}\alpha_{{2}}I_{{2}}-a_{{
12}}a_{{11}}c_{{1}}{\alpha_{{2}}}^{2}{I_{{2}}}^{2}-a_{{12}}a_{{11}}c_{
{1}}\alpha_{{1}}I_{{2}}-2\,a_{{12}}a_{{11}}c_{{1}}\alpha_{{2}}I_{{2}}\nonumber \\
&+a_{{11}}a_{{12}}c_{{5}}\alpha_{{2}}I_{{2}}+a_{{11}}a_{{12}}c_{{5}}
\alpha_{{1}}I_{{2}}-{a_{{12}}}^{2}c_{{3}}\alpha_{{1}}{I_{{2}}}^{2}
\alpha_{{2}}-{a_{{12}}}^{2}c_{{3}}+a_{{11}}a_{{12}}c_{{5}}\alpha_{{1}}
{I_{{2}}}^{2}\alpha_{{2}}\nonumber \\
&-2\,a_{{12}}a_{{11}}a_{{22}}\alpha_{{1}}
\alpha_{{2}}I_{{2}}-2\,a_{{12}}a_{{11}}c_{{1}}\alpha_{{1}}{I_{{2}}}^{2
}\alpha_{{2}}-a_{{12}}a_{{11}}c_{{1}}\alpha_{{1}}{I_{{2}}}^{3}{\alpha_
{{2}}}^{2}+2\,{a_{{11}}}^{2}c_{{2}}\alpha_{{1}}{I_{{2}}}^{2}\alpha_{{2
}}\nonumber \\
&+{a_{{11}}}^{2}c_{{2}}\alpha_{{1}}{I_{{2}}}^{3}{\alpha_{{2}}}^{2}+{a
_{{11}}}^{2}c_{{2}} ]
 \\
h(u,v) &=\Lambda\, \left( 1+\alpha_{{1}}I_{{2}} \right) ^{2}a_{{12}} [
 \left( \alpha_{{1}}\Lambda+\Lambda\,{\alpha_{{1}}}^{2}I_{{2}}
 \right) v+a_{{12}}-\alpha_{{1}}a_{{11}}u+a_{{12}}\alpha_{{1}}I_{{2}}
 ] \nonumber \\
 & [  \left( \alpha_{{2}}\Lambda+\alpha_{{2}}\Lambda\,
\alpha_{{1}}I_{{2}} \right) v+a_{{12}}-\alpha_{{2}}a_{{11}}u+\alpha_{{
2}}I_{{2}}a_{{12}} ]  \left( 1+\alpha_{{2}}I_{{2}} \right) .
\end{align*}

Let
\begin{align}
\bar{a}_2 &= \dfrac{1}{16} [ (H_1)_{uuu} + (H_1)_{uvv} + (H_{2})_{uuv} + (H_2)_{vvv}  ] + \dfrac{1}{16( - \Lambda)}  [(H_1)_{uv} ((H_1)_{uu} + (H_1)_{vv}) \nonumber \\
& - (H_{2})_{uv} ( (H_{2})_{uu} + (H_2)_{vv} ) - (H_1)_{uu}(H_2)_{uu} + (H_1)_{vv} (H_2)_{vv}].
\end{align}
 Then
\begin{align}
\bar{a}_{2} &= \dfrac{3\left(  \left( -c_{{1}}\Lambda\,v{\alpha_{{1}}}^{2}I_{{2}}+\Lambda\,va_{{11}}
{\alpha_{{1}}}^{2}-a_{{12}}c_{{1}}\alpha_{{1}}I_{{2}}+a_{{11}}c_{{2}}
\alpha_{{1}}I_{{2}}-c_{{1}}\Lambda\,v\alpha_{{1}}-a_{{12}}c_{{1}}+a_{{
11}}c_{{2}} \right) a_{{12}}{a_{{11}}}^{2}\alpha_{{1}}
 \right)}{8 \left( a_{{12}}+\alpha_{{1}}\Lambda\,v \right) ^{4} \left( 1+\alpha_{
{1}}I_{{2}} \right) ^{3}
}  \nonumber \\
& - \dfrac{\left( -3\,a_{{11}}c_{{2}}-3\,a_{{11}}c_{{2}}\alpha_{{1}}I_{{2}}+2\,a
_{{12}}a_{{11}}\alpha_{{1}}+a_{{12}}c_{{1}}+a_{{12}}c_{{1}}\alpha_{{1}
}I_{{2}} \right) \alpha_{{1}}{\Lambda}^{2}
}{8 \left( 1+\alpha_{{1}}I_{{2}} \right) {a_{{12}}}^{3}} \nonumber \\
& - \dfrac{1}{8 \Lambda\, \left( 1+\alpha_{{1}}I_{{2}} \right) ^{4}{a_{{12}}}^{4}
 \left( 1+\alpha_{{2}}I_{{2}} \right) ^{3}
}[ 2\,a_{{11}}A_{{5}}\alpha_{{1}}\Lambda+6\,a_{{11}}A_{{5}}\alpha_{{1}}
\Lambda\,\alpha_{{2}}I_{{2}}+2\,a_{{11}}A_{{5}}{\alpha_{{1}}}^{2}
\Lambda\,I_{{2}} \nonumber \\
&+4\,a_{{11}}A_{{5}}{\alpha_{{1}}}^{2}\Lambda\,{I_{{2}}
}^{2}\alpha_{{2}}+2\,a_{{11}}A_{{5}}\alpha_{{2}}\Lambda-{a_{{11}}}^{2}
A_{{3}}\alpha_{{1}}-2\,{a_{{11}}}^{2}A_{{3}}\alpha_{{1}}\alpha_{{2}}I_
{{2}}-{a_{{11}}}^{2}A_{{3}}\alpha_{{2}}-a_{{11}}A_{{2}}a_{{12}} \nonumber \\
&-a_{{11
}}A_{{2}}a_{{12}}\alpha_{{2}}I_{{2}}-a_{{11}}A_{{2}}a_{{12}}\alpha_{{1
}}I_{{2}}-a_{{11}}A_{{2}}a_{{12}}\alpha_{{1}}{I_{{2}}}^{2}\alpha_{{2}}
+A_{{4}}\Lambda\,a_{{12}}+A_{{4}}\Lambda\,a_{{12}}\alpha_{{2}}I_{{2}}\nonumber \\
&+2\,A_{{4}}\Lambda\,a_{{12}}\alpha_{{1}}I_{{2}}+2\,A_{{4}}\Lambda\,a_{{
12}}\alpha_{{1}}{I_{{2}}}^{2}\alpha_{{2}}+A_{{4}}\Lambda\,a_{{12}}{
\alpha_{{1}}}^{2}{I_{{2}}}^{2}+A_{{4}}\Lambda\,a_{{12}}{\alpha_{{1}}}^
{2}{I_{{2}}}^{3}\alpha_{{2}}]\nonumber \\
&+ \dfrac{3}{8} {\frac {(-A_{{1}}a_{{12}}-A_{{1}}a_{{12}}\alpha_{{2}}I_{{2}}+A_{{3}}
\alpha_{{1}}\Lambda+2\,A_{{3}}\alpha_{{1}}\Lambda\,\alpha_{{2}}I_{{2}}
+A_{{3}}\alpha_{{2}}\Lambda)}{ \left( 1+\alpha_{{1}}I_{{2}} \right) ^{2
}{a_{{12}}}^{4} \left( 1+\alpha_{{2}}I_{{2}} \right) ^{3}}} \nonumber \\
 &- \dfrac{1}{16 \Lambda }[ -2\,{\frac {\Lambda\, \left( -2\,a_{{11}}c_{{2}}-2\,a_{{11}}c_{{2}}
\alpha_{{1}}I_{{2}}+a_{{12}}a_{{11}}\alpha_{{1}}+a_{{12}}c_{{1}}+a_{{
12}}c_{{1}}\alpha_{{1}}I_{{2}} \right)   }{{a_{{12}}}^{4} \left( 1+\alpha_{{1}}I_{{2}}
 \right) ^{2}}} \nonumber \\
 & -2\,{\frac { \left( A_{{5}}\Lambda+A_{{5}}\Lambda\,\alpha_{{1}}I_{{2}}
-a_{{11}}A_{{3}} \right)  \left( -a_{{11}}A_{{5}}+A_{{3}}\Lambda+A_{{3
}}\Lambda\,\alpha_{{1}}I_{{2}} \right) }{{\Lambda}^{2} \left( 1+\alpha
_{{1}}I_{{2}} \right) ^{6}{a_{{12}}}^{6} \left( 1+\alpha_{{2}}I_{{2}}
 \right) ^{4}}}  \nonumber \\
 & -4\,{\frac { \left( -a_{{12}}c_{{1}}+a_{{11}}c_{{2}} \right) {a_{{11}}
}^{2}A_{{5}}}{{a_{{12}}}^{5} \left( 1+\alpha_{{1}}I_{{2}} \right) ^{4}
\Lambda\, \left( 1+\alpha_{{2}}I_{{2}} \right) ^{2}}}
4\,{\frac { \left( -c_{{2}}\alpha_{{1}}I_{{2}}+a_{{12}}\alpha_{{1}}-c_
{{2}} \right) {\Lambda}^{2}A_{{3}}}{{a_{{12}}}^{5} \left( 1+\alpha_{{1
}}I_{{2}} \right) ^{2} \left( 1+\alpha_{{2}}I_{{2}} \right) ^{2}}}
].
\end{align}

\end{document}